\documentclass[ECP,preprint]{ejpecp}

\SHORTTITLE{CLT for persistent homology of the Wiener sausage}

\TITLE{Persistent Homology of the Wiener Sausage II:\\ A Central Limit Theorem for Drifted Planar Brownian Motion}

\AUTHORS{%
  Tristan~Guillaume\\[6pt]
  \normalsize CY Cergy Paris Universit\'{e}, Laboratoire Thema, Cergy, France\\[12pt]
  \small Correspondence: Tristan Guillaume, CY Cergy Paris Universit\'{e}, Laboratoire Thema,\\
  \small 33 boulevard du port, F-95011 Cergy-Pontoise Cedex, France.\\
  \small Tel.: 33-6-12-22-45-88.\\
  \small Email: \texttt{tristan.guillaume@cyu.fr}%
}%AUTHORS

\KEYWORDS{persistent homology ; Wiener sausage ; Brownian motion with drift ; central limit theorem ; regeneration ; Betti curves ; persistence diagrams ; renewal theory ; 1-dependent sequences}

\AMSSUBJ{60F05 ; 55N31 ; 60J65 ; 60K05 ; 60D05}

\ABSTRACT{Let $X_t = B_t + \mu t$, $t \geq 0$, be planar Brownian motion with nonzero drift, and let $K_t^r = \{x \in \R^2 : \dist(x, X[0,t]) \leq r\}$ be the radius-$r$ Wiener sausage up to time~$t$. For a bounded Borel function~$\psi$ supported in a compact interval $[r_0, r_1] \subset (0,\infty)$, consider the smoothed Betti-curve functional
$\Phi_\psi(t) := \int_{r_0}^{r_1} \beta_1^t(r)\,\psi(r)\,dr$,
where $\beta_1^t(r)$ denotes the number of holes of~$K_t^r$.
In a previous paper, a regeneration scheme along the drift direction was used to prove a law of large numbers for~$\Phi_\psi(t)$. In the present paper we prove the corresponding central limit theorem. More precisely, there exist a deterministic constant~$\rho_\psi$ and a variance $\sigma_\psi^2 \geq 0$ such that
$(\Phi_\psi(t) - \rho_\psi t)/\sqrt{t} \xrightarrow{d}_{t \to \infty} \mathcal{N}(0, \sigma_\psi^2)$.
We also obtain the finite-dimensional Gaussian limit for finitely many test functions.
The proof preserves the regenerative structure of the law of large numbers, but requires a new $L^2$ analysis of the topological interface terms created at regeneration cuts. The key input is a finite-time polynomial moment bound for integrated hole counts of the Wiener sausage. This yields square-integrability of cycle increments, within-cycle oscillations, and the last incomplete-cycle remainder, which in turn allows one to combine a standard central limit theorem for stationary $1$-dependent sequences with a renewal time-change argument.}

\newcommand{\R}{\mathbb{R}}
\newcommand{\E}{\mathbb{E}}
\newcommand{\Var}{\operatorname{Var}}
\newcommand{\Cov}{\operatorname{Cov}}
\newcommand{\dist}{\operatorname{dist}}

\begin{document}
%%%%%%%%%%%%%%%%%%%%%%%%%%%%%%%%%%%%%%%%%%%%%%%%%%%%%%%%%%%%%%%%%%%

\section{Introduction}

Let
\begin{equation}\label{eq:X}
  X_t = B_t + \mu t, \quad t \geq 0,
\end{equation}
where $B$ is standard planar Brownian motion and $\mu \in \R^2 \setminus \{0\}$.
For $r > 0$ and $t \geq 0$, define the Wiener sausage
\begin{equation}\label{eq:sausage}
  K_t^r = \bigl\{ x \in \R^2 : \dist(x, X[0,t]) \leq r \bigr\}.
\end{equation}
As $r$ varies, the family $(K_t^r)_{r \geq 0}$ forms an offset filtration of compact sets.
In the planar case, its degree-one persistent homology records the creation and filling of
holes in the thickened Brownian trace.

In the first paper of this project~\cite{Guillaume2025}, a regeneration scheme along the drift
direction was used to prove a law of large numbers for smoothed degree-one persistence
observables. More precisely, for bounded Borel $\psi$ supported in a compact radius window
$[r_0, r_1] \subset (0,\infty)$, one considers
\begin{equation}\label{eq:Phipsi}
  \Phi_\psi(t) := \int_{r_0}^{r_1} \beta_1^t(r)\, \psi(r)\, dr,
\end{equation}
where $\beta_1^t(r)$ is the first Betti number of $K_t^r$.
The main result of~\cite{Guillaume2025} states that there exists a deterministic constant
$\rho_\psi$ such that
\begin{equation}\label{eq:LLN}
  \frac{\Phi_\psi(t)}{t} \to \rho_\psi \quad \text{almost surely and in } L^1.
\end{equation}
The aim of the present paper is to prove the corresponding central limit theorem.

The regenerative decomposition already contains the correct dependence structure. If $\tau_n$
denotes the regeneration times from~\cite{Guillaume2025}, then the exact increment of $\Phi_\psi$
across one regeneration cycle depends only on two consecutive recentered regeneration blocks.
The resulting cycle increments therefore form a stationary $1$-dependent sequence. What is
genuinely new in the CLT, compared with the LLN, is not the dependence structure but the
integrability scale: the interface terms created near regeneration cuts must now be controlled
in $L^2$, not just in $L^1$.

The main analytic input is therefore a finite-time polynomial moment bound for integrated hole
counts of the Wiener sausage. This estimate is crude but sufficient, because all random time
lengths arising from regeneration have exponential moments. It yields square-integrable control of:
\begin{itemize}
\item the exact cycle increment,
\item the oscillation of $\Phi_\psi$ within a cycle,
\item and the final incomplete-cycle remainder.
\end{itemize}
Once these $L^2$-bounds are available, the proof becomes probabilistic: a standard CLT applies at
regeneration times, and a renewal time-change argument transfers the limit to deterministic time.

From the point of view of topological limit theorems, the present result is the second-order
companion to the law of large numbers proved in~\cite{Guillaume2025}. For stationary random
geometric models, laws of large numbers and central limit theorems for persistence diagrams and
persistent Betti numbers have been obtained in several settings; see, for
instance,~\cite{Hiraoka2018,Krebs2022a,Krebs2022b}, as well as the survey
in~\cite{Bobrowski2018}. For some background on the Wiener sausage itself,
see~\cite{Sznitman1998}, and~\cite{Guillaume2025} for more references. The mechanism here is
different: the randomness comes from a single continuous path, and the proof uses temporal
regeneration rather than spatial ergodicity.

\subsection{Betti-curve observables}

We work with the same class of observables as in~\cite{Guillaume2025}. If $\mu_t^1$ denotes the
degree-one persistence counting measure of the offset filtration $(K_t^r)_{r \geq 0}$, then
\begin{equation}\label{eq:PhipsiPD}
  \Phi_\psi(t)
  = \int \varphi_\psi(b,d)\, \mu_t^1(db\, dd),
  \qquad
  \varphi_\psi(b,d) := \int_b^d \psi(r)\, dr.
\end{equation}
Equivalently,
\begin{equation}\label{eq:PhipsiBetti}
  \Phi_\psi(t) = \int_{r_0}^{r_1} \beta_1^t(r)\, \psi(r)\, dr.
\end{equation}
Thus the present theorem is a fluctuation theorem for the Betti-curve test class associated
with compactly supported radius weights. As emphasized already in~\cite{Guillaume2025}, this
class is natural for the regeneration method because it reduces the two-parameter persistence
problem to a one-parameter family of fixed-radius hole counts. It is not measure-determining for
the full persistence diagram, so the theorem should be understood as a CLT for smoothed Betti
observables rather than for the full persistence measure; for general background on stability and
persistence modules, see~\cite{Chazal2016,CohenSteiner2007}.

\subsection{Main results}

Our main theorem is the following.

\begin{theorem}[CLT for smoothed persistence intensity]\label{th:CLT}
  Let $\psi$ be a bounded Borel function supported in a compact interval
  $[r_0, r_1] \subset (0,\infty)$. Then there exists $\sigma_\psi^2 \geq 0$ such that
  \begin{equation}\label{eq:CLT}
    \frac{\Phi_\psi(t) - \rho_\psi t}{\sqrt{t}}
    \xrightarrow{d}_{t \to \infty}
    \mathcal{N}(0, \sigma_\psi^2).
  \end{equation}
  The variance is given by an explicit Green--Kubo formula in terms of the centered
  regeneration-cycle rewards; see Theorem~\ref{th:detCLT} below.
\end{theorem}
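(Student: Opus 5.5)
The proof follows the regenerative route sketched in the introduction. From \cite{Guillaume2025} we take the regeneration times $0<\tau_1<\tau_2<\cdots$ along the drift direction. The recentered blocks $\mathcal{B}_n = (X_{\tau_{n}+s}-X_{\tau_{n}})_{0\le s\le \tau_{n+1}-\tau_n}$ are i.i.d. for $n\ge1$, the initial block is independent of them, and the cycle lengths $\tau_{n+1}-\tau_n$ and the lateral spread of each block have exponential moments. Set $\xi_n := \Phi_\psi(\tau_{n+1})-\Phi_\psi(\tau_n)$.

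\textbf{Step 1: the $\xi_n$ form a stationary 1-dependent sequence.} At a regeneration time the past path lies in a half-plane and the future in the complementary one, up to the $2r_1$-neighbourhood of the cut. Hence, for each $r\le r_1$, the change in $\beta_1(K^r)$ caused by appending block $n$ is determined by blocks $n-1$ and $n$. This is a Mayer--Vietoris argument across the cut, as used in \cite{Guillaume2025}. Integrating against $\psi$ shows $\xi_n = F(\mathcal{B}_{n-1},\mathcal{B}_n)$ for a fixed measurable $F$, so $(\xi_n)_{n\ge2}$ is stationary and 1-dependent.

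\textbf{Step 2: $L^2$ bounds (the main obstacle).} I need three bounds:
\[
\E\xi_n^2<\infty,\qquad \E\Bigl[\sup_{\tau_n\le s\le\tau_{n+1}}|\Phi_\psi(s)-\Phi_\psi(\tau_n)|^2\Bigr]<\infty,
\]
and the analogous bound for the initial segment. Both follow from a finite-time moment bound of the form
\[
\E\Bigl[\sup_{s\le T}\Bigl(\int_{r_0}^{r_1}\beta_1^s(r)\,dr\Bigr)^{p}\Bigr]\le C_p(1+T)^{k_p}\quad\text{for some } k_p<\infty.
\]
I would prove it by counting holes of $K^r_s$ for $r\ge r_0$ via a grid of mesh $\sim r_0$. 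Each bounded component of the complement of $K_s^r$ contains a disc of radius about $r_0$ minus $r$, which is degenerate near $r$. So instead I would bound $\beta_1$ by the number of grid squares met by the path times a constant, using nerve or Euler characteristic bounds for unions of balls on a fine net of the path. The number of such squares is controlled by the number of $r_0$-scale excursions, which has all polynomial moments in $T$. I expect the main difficulty to be exactly here: making the nerve argument uniform in $r\in[r_0,r_1]$. Then $\sup_{s}|\Phi_\psi(s)-\Phi_\psi(\tau_n)|\le 2\|\psi\|_\infty\sup_{s\le\tau_{n+1}}\int\beta_1^s$ where only the local blocks matter. Cauchy--Schwarz with the exponential moments of the cycle length gives $\E[\cdot^2]\le\sum_T \E[\cdot^4]^{1/2}P(\tau_{n+1}-\tau_n\ge T)^{1/2}<\infty$.

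\textbf{Step 3: CLT and time change.} Let $m=\E(\tau_2-\tau_1)$; then $\rho_\psi=\E\xi_2/m$ by the LLN. Define $\zeta_n=\xi_n-\rho_\psi(\tau_{n+1}-\tau_n)$, which is centered, square-integrable and 1-dependent. The CLT for stationary $m$-dependent sequences gives
\[
n^{-1/2}\sum_{k\le n}\zeta_k\Rightarrow\mathcal N(0,s^2),\qquad s^2=\Var\zeta_2+2\Cov(\zeta_2,\zeta_3).
\]
Let $N(t)=\max\{n:\tau_n\le t\}$, so that $N(t)/t\to 1/m$ almost surely. Decompose
\[
\Phi_\psi(t)-\rho_\psi t=\sum_{k<N(t)}\zeta_k+\bigl[\Phi_\psi(t)-\Phi_\psi(\tau_{N(t)})-\rho_\psi(t-\tau_{N(t)})\bigr]+\bigl[\Phi_\psi(\tau_1)-\rho_\psi\tau_1\bigr].
\]
The last two terms are $o_P(\sqrt t)$. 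For the middle one, the maximal within-cycle oscillation $M_n$ is identically distributed in $L^2$, so $\max_{n\le Ct}M_n=o_P(\sqrt t)$. Anscombe's theorem, using the tightness from Kolmogorov/Doob maximal inequalities for 1-dependent sums split into even and odd indices, replaces $N(t)$ by $t/m$. This yields \eqref{eq:CLT} with $\sigma_\psi^2=s^2/m$, the Green--Kubo formula of Theorem~\ref{th:detCLT}.
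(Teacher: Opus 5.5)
The gap is in Step~2, the only genuinely new analytic input. You propose to bound $\beta_1(K^r)$ at a \emph{fixed} $r\in[r_0,r_1]$ by a constant times the number of grid squares of mesh $\epsilon\sim r_0$ met by the path. No nerve or Euler-characteristic argument can prove this, because the inequality is false for deterministic connected sets. Fix $R\in[r_0,r_1)$, a small $h>0$ and $n$ with $nh<\epsilon$, and put $a_i=(-R,ih)$, $b_i=(R,ih)$ for $i=1,\dots,n$. Let $K$ consist of two combs whose horizontal teeth, of length between $h^2/R$ and $\epsilon$, point toward each other and end exactly at the $a_i$ and at the $b_i$. Join the two spines by an arc passing at distance $2r_1$ below them; $K$ is then a tree, hence the trace of a continuous path. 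For $R\le r<\sqrt{R^2+h^2/4}$, every segment $[a_i,b_i]$, $[a_i,a_{i+1}]$, $[b_i,b_{i+1}]$ lies in $K^r$. The centre of each rectangle with corners $a_i,a_{i+1},b_{i+1},b_i$ ($i<n$) is at distance exactly $\sqrt{R^2+h^2/4}>r$ from $K$, so each centre lies in its own bounded component of $\R^2\setminus K^r$. Hence $\beta_1(K^r)\ge n-1$, although $K$ meets only $O(1+r_1/r_0)$ grid squares, uniformly in $n$. Passing to a fine $\delta$-net of the path does not rescue this: hole counts for unions of balls centred on the net grow with its cardinality and are not stable as $\delta\to0$. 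So the obstacle you flag is not a matter of uniformity in $r$; the pointwise-in-$r$ statement itself fails.

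The missing idea is to integrate in $r$ \emph{before} bounding. In the example, the $n-1$ separate holes coexist only for $r$ in an interval of length about $h^2/(8R)$. Their contribution to $\int_{r_0}^{r_1}\beta_1(K^r)\,dr$ is therefore of order $nh^2/R\le\epsilon h/R$. The paper exploits exactly this through the deterministic inequality $\int_{r_0}^{r_1}\beta_1(K^r)\,dr\le|K^{r_1}|/(2\pi r_0)$ of \eqref{eq:HKbound}, imported from the LLN paper. It turns every moment of an integrated hole count into a moment of a sausage area. Since $|K_s^{r_1}|$ is nondecreasing in $s$, and $K_T^{r_1}$ lies in a rectangle with sides of order $1+T+\sup_{s\le T}|B_s|$ and $1+\sup_{s\le T}|B_s|$, one gets $\E\bigl[\sup_{s\le T}\bigl(\int_{r_0}^{r_1}\beta_1^s(r)\,dr\bigr)^p\bigr]\le C_p(1+T)^{3p/2}$ at once. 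Your Cauchy--Schwarz summation against the exponential tail of the cycle length then gives $\zeta_n,M_n\in L^2$, modulo the two-block localization that you and the paper both import. That summation handles the dependence between the random horizon and the path more carefully than the one-line proof of Corollary~\ref{cor:randomtime}. With Step~2 repaired this way, the rest of your argument is correct and differs from the paper only in routine ways. Anscombe's theorem with the even/odd splitting replaces the functional CLT plus random time change. Your bound $\max_{n\le Ct}M_n=o_P(\sqrt t)$ for identically distributed square-integrable oscillations replaces the paper's $\sup_t\E[R_t^2]<\infty$, and it handles the size-biased straddling cycle cleanly.
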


The same method yields a multivariate extension.

\begin{theorem}[Finite-dimensional Gaussian fluctuations]\label{th:multCLT}
  Let $\psi_1, \ldots, \psi_m$ be bounded Borel functions supported in a common compact
  interval $[r_0, r_1]$. Then
  \begin{equation}\label{eq:multCLT}
    \left(
      \frac{\Phi_{\psi_1}(t) - \rho_{\psi_1} t}{\sqrt{t}},
      \ldots,
      \frac{\Phi_{\psi_m}(t) - \rho_{\psi_m} t}{\sqrt{t}}
    \right)
    \xrightarrow{d}_{t \to \infty}
    \mathcal{N}(0, \Sigma),
  \end{equation}
  for an explicit covariance matrix $\Sigma$.
\end{theorem}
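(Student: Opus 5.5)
The plan is to reduce Theorem~\ref{th:multCLT} to the scalar Theorem~\ref{th:CLT} by the Cram\'er--Wold device, using linearity of $\psi \mapsto \Phi_\psi$. Fix $a = (a_1,\ldots,a_m) \in \R^m$ and set $\psi_a := \sum_{j} a_j \psi_j$. This is again a bounded Borel function supported in $[r_0,r_1]$. Since $\beta_1^t(r)$ does not depend on $\psi$, we have $\Phi_{\psi_a}(t) = \sum_j a_j \Phi_{\psi_j}(t)$ pathwise. Taking limits in \eqref{eq:LLN} also gives $\rho_{\psi_a} = \sum_j a_j \rho_{\psi_j}$, because the almost sure limit of a linear combination is the linear combination of the limits. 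Theorem~\ref{th:CLT} applied to $\psi_a$ then yields
\[
\sum_j a_j \frac{\Phi_{\psi_j}(t) - \rho_{\psi_j} t}{\sqrt t} \xrightarrow{d} \mathcal N(0, \sigma^2_{\psi_a}).
\]

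Next, to identify $\Sigma$ and to check that $a \mapsto \sigma^2_{\psi_a}$ is a quadratic form, I would use the Green--Kubo formula of Theorem~\ref{th:detCLT}. Write $Y_n(\psi)$ for the centered cycle reward, namely the exact cycle increment minus $\rho_\psi$ times the cycle length. The variance has the form
\[
\sigma_\psi^2 = \frac{\E[Y_1(\psi)^2] + 2\E[Y_1(\psi)Y_2(\psi)]}{\E[\tau_2-\tau_1]},
\]
using stationarity and $1$-dependence of the cycle sequence. The map $\psi \mapsto Y_n(\psi)$ is linear, since the cycle increment is linear in $\psi$ and so is $\rho_\psi$. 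Hence $\sigma^2_{\psi_a} = a^\top \Sigma a$ with
\[
\Sigma_{jk} = \frac{\E[Y_1(\psi_j)Y_1(\psi_k)] + \E[Y_1(\psi_j)Y_2(\psi_k)] + \E[Y_2(\psi_j)Y_1(\psi_k)]}{\E[\tau_2-\tau_1]}.
\]
These entries are finite because the $L^2$ bounds on cycle increments used for Theorem~\ref{th:CLT} hold for every $\psi_j$, so Cauchy--Schwarz applies. Cram\'er--Wold now gives the joint limit $\mathcal N(0,\Sigma)$.

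Of the steps above, the only one needing care is this identification of $\sigma^2_{\psi_a}$ as a quadratic form in $a$. It relies on the regeneration times $\tau_n$ being defined from the path alone, independently of $\psi$, which is how they arise in \cite{Guillaume2025}. With that, every term in the Green--Kubo expression is bilinear in the test functions. If the variance were instead only characterized as a limit, I would argue that $\Var(\sum_j a_j \Phi_{\psi_j}(\tau_n))/n$ is a quadratic form for each $n$, and that pointwise limits of quadratic forms are quadratic forms. This gives the same conclusion.
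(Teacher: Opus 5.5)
Your proposal is correct and matches the paper's proof: Cram\'er--Wold applied to the linear combination $\sum_j a_j\Phi_{\psi_j}(t)=\Phi_{\psi_a}(t)$, whose centered cycle reward is $\sum_j a_j Y_n(\psi_j)$ for the same regeneration times (the common window $[r_0,r_1]$ guarantees this). In both, $\Sigma$ is identified by expanding the lag-$0$/lag-$1$ Green--Kubo expression bilinearly; your shift of indices to $Y_1,Y_2$ and $\tau_2-\tau_1$ is harmless by stationarity, and your $\Sigma_{jk}$ coincides with the paper's formula.
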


We do not address strict positivity of the limiting variance in this paper. The fluctuation
theorem is complete without it, and the variance is identified through its Green--Kubo expression.

\subsection{Organization of the paper}

Section~\ref{sec:regen} recalls the regenerative inputs from~\cite{Guillaume2025} that are
used here as black boxes. Section~\ref{sec:L2} contains the only genuinely new analytic
ingredient: a finite-time polynomial moment bound and the resulting $L^2$-control of cycle
increments, within-cycle oscillations, and the incomplete-cycle remainder.
Section~\ref{sec:CLTregen} uses these estimates to prove a central limit theorem at
regeneration times for the centered cycle rewards. Section~\ref{sec:renewal} performs the
renewal time change and proves the negligibility of the final incomplete cycle.
Section~\ref{sec:detCLT} combines these ingredients to obtain the deterministic-time central
limit theorem and the Green--Kubo formula for the limiting variance. Finally,
Section~\ref{sec:multidim} establishes the finite-dimensional Gaussian fluctuation theorem.

\section{Regenerative inputs from the LLN paper}\label{sec:regen}

In this section we isolate the structural facts imported from~\cite{Guillaume2025}. No proofs
are repeated. The role of the section is only to fix notation and identify the black-box inputs
needed in the fluctuation argument.

Let
\begin{equation}\label{eq:eU}
  e := \frac{\mu}{\|\mu\|},
  \qquad
  U_t := \langle X_t, e \rangle.
\end{equation}
Then $U$ is a one-dimensional Brownian motion with positive drift.
As in~\cite{Guillaume2025}, one constructs regeneration times by cutting the path when $U$
reaches successive levels and selecting those cuts after which the path never backtracks beyond
a fixed negative threshold. This yields stopping times
\begin{equation}\label{eq:tau}
  0 = \tau_0 < \tau_1 < \tau_2 < \cdots
\end{equation}
such that the associated recentered path blocks are i.i.d.

Write
\begin{equation}\label{eq:eta}
  \eta_n := \tau_{n+1} - \tau_n
\end{equation}
for the cycle lengths. The first key fact is that $\eta_n$ is i.i.d.\ and has exponential
moments. The second is that, for each bounded Borel $\psi$ supported in $[r_0, r_1]$, the
regeneration-time increment
\begin{equation}\label{eq:Deltan}
  \Delta_n := \Phi_\psi(\tau_{n+1}) - \Phi_\psi(\tau_n)
\end{equation}
is an exact measurable function of two consecutive recentered regeneration blocks (specifically,
blocks $n$ and $n+1$, through the interface at $\tau_{n+1}$). Consequently, $\Delta_n$ is
stationary and $1$-dependent.

The third ingredient is the interface-localization principle from~\cite{Guillaume2025}: failures
of additivity of fixed-radius hole counts across a regeneration cut are confined to a bounded
interface region around the cut. The spatial width of that region depends only on the chosen
radius window, while its temporal extent is controlled by local forward and backward window
times having exponential moments.

We summarize what will be used later.

\begin{proposition}[Imported regenerative inputs]\label{prop:imported}
  Fix a compact radius window $[r_0, r_1] \subset (0,\infty)$. Then the following hold.
  \begin{itemize}
  \item[\emph{(i)}] There exists a sequence of regeneration times $(\tau_n)_{n \geq 0}$ such
    that the associated recentered regeneration blocks are i.i.d.
  \item[\emph{(ii)}] The cycle lengths $(\eta_n)_{n \geq 0}$ are i.i.d.\ and admit
    exponential moments.
  \item[\emph{(iii)}] For every bounded Borel function $\psi$ supported in $[r_0, r_1]$,
    the regeneration-time increments
    \begin{equation}\label{eq:Deltan2}
      \Delta_n = \Phi_\psi(\tau_{n+1}) - \Phi_\psi(\tau_n)
    \end{equation}
    form a stationary $1$-dependent sequence.
  \item[\emph{(iv)}] All topological nonadditivity across a regeneration cut is localized to an
    interface region whose temporal extent is controlled by local forward and backward window
    times with exponential moments.
  \end{itemize}
\end{proposition}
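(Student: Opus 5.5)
This proposition is imported from \cite{Guillaume2025}, so I will not reprove the construction. Instead I will check that each item follows from the regeneration scheme and record the form in which it is used later. The plan is to recall the construction of the $\tau_n$ through the one-dimensional drifted motion $U_t=\langle X_t,e\rangle$ of \eqref{eq:eU}. Fix a spatial buffer $L>2r_1$. Cut the path at the successive hitting times of levels $kL$ by $U$, and declare a cut a regeneration time if, after it, $U$ never returns below the cut level minus a fixed threshold. By the strong Markov property of $X$ and independence of the orthogonal component $\langle B_t, e^\perp\rangle$, the blocks between consecutive regeneration times, recentered at $X_{\tau_n}$, are i.i.d. This is the standard Sznitman--Zerner argument for one-dimensional transient motion; it gives (i).

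For (ii), I would use the fact that each candidate cut succeeds independently with probability $p=\mathbb P(\inf_s (U_s-U_0)>-\text{threshold})>0$, which holds by positive drift. Each failed attempt costs a backtracking excursion plus a new level crossing. The number of attempts is geometric, and each attempt's duration has exponential moments: hitting times of Brownian motion with drift do, as do the durations of the failed excursions conditioned on return. A geometric sum of such variables has exponential moments for small enough parameter, which gives (ii).

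For (iii) and (iv), the key observation is that $\beta_1$ of $K^r$ for $r\le r_1$ is determined by the union of $r$-neighbourhoods. Regeneration forces all path before $\tau_{n}$ to lie in the half-plane $\{\langle x,e\rangle\le U_{\tau_n}\}$ and all path after to lie in $\{\langle x,e\rangle\ge U_{\tau_n}-\text{threshold}\}$. A Mayer--Vietoris argument then confines the failure of additivity of $\beta_1^t(r)$ to the intersection of the two sausages. That intersection lies in a slab of width $\text{threshold}+2r_1$ around the cut, and it is visited only during backward and forward window times (last exit from / first exit of the slab), which are dominated by hitting times of drifted Brownian motion and so have exponential moments. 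This gives (iv).

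Consequently $\Phi_\psi(\tau_{n+1})-\Phi_\psi(\tau_n)$ equals the intrinsic contribution of block $n$ plus interface corrections at $\tau_n$ and $\tau_{n+1}$. Each correction is a measurable function of the adjacent blocks. With the bookkeeping convention of \cite{Guillaume2025}, these combine into a function of blocks $n$ and $n+1$ only. Stationarity and $1$-dependence of $(\Delta_n)$ then follow from the i.i.d.\ property of the blocks, which gives (iii).

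The main obstacle is the Mayer--Vietoris localization in (iv). One must show that no hole of $K_{\tau_{n+1}}^r$ straddles the cut in an uncontrolled way. This requires that the intersection of the past and future sausages be contained in the slab and that the connecting map only sees cycles meeting that slab. I would handle it exactly as in \cite{Guillaume2025}, by choosing the threshold larger than $2r_1$ so the regions are separated.
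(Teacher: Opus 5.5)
The paper gives no argument here beyond citing \cite{Guillaume2025}, so your sketch has to stand on its own, and its derivation of (iii) does not. Take block $k$ to be the path on $[\tau_k,\tau_{k+1}]$. The exact increment $\Delta_n=\Phi_\psi(\tau_{n+1})-\Phi_\psi(\tau_n)$ is a functional of $X[0,\tau_{n+1}]$, i.e.\ of blocks $0,\dots,n$. It contains no interface term at $\tau_{n+1}$, because nothing after $\tau_{n+1}$ has been added. Mayer--Vietoris for the sausages of $X[0,\tau_n]$ and $X[\tau_n,\tau_{n+1}]$ gives $\Delta_n=\int_{r_0}^{r_1}\beta_1\bigl(X[\tau_n,\tau_{n+1}]^r\bigr)\,\psi(r)\,dr+I_n$, with a single correction $I_n$ at the cut $\tau_n$. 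What the localization has to show, given the separation discussed below, is that $I_n$ depends only on blocks $n-1$ and $n$. The $H_0$ part is local because the sausages of the past and of block $n$ are connected; the $H_1$ part is local because a cycle supported in the overlap slab can only wind around points of that slab. Hence $\Delta_n=f(\mathcal{B}_{n-1},\mathcal{B}_n)$ for the recentered blocks $\mathcal{B}_k$, and this is what gives $1$-dependence. Your version, with corrections at both $\tau_n$ and $\tau_{n+1}$ each depending on the adjacent blocks, would make $\Delta_n$ a function of blocks $n-1,n,n+1$ and give only $2$-dependence. No bookkeeping convention can fix this, since $\Delta_n$ is an exactly defined random variable. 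Indeed, if $\Delta_n$ were a function of blocks $n$ and $n+1$, then, being also measurable with respect to blocks $0,\dots,n$, with block $n+1$ independent of these, it would a.s.\ be a function of block $n$ alone, i.e.\ there would be no interface effect at all. The remark in Section~\ref{sec:regen} attributing $\Delta_n$ to blocks $n$ and $n+1$ ``through the interface at $\tau_{n+1}$'' has the same off-by-one problem, so do not try to reproduce it.

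Two further points need correcting. First, locality of $I_n$ requires the level spacing to exceed the threshold plus $2r_1$. With $\ell_k=U_{\tau_k}$ and threshold $a$, the path before $\tau_{n-1}$ lies in $\{\langle x,e\rangle\le\ell_{n-1}\}$ and block $n$ lies in $\{\langle x,e\rangle>\ell_n-a\}$. So you need $L>a+2r_1$ for their $r_1$-sausages, and for the overlap slab at $\tau_n$, to avoid each other. Your choices, $L>2r_1$ and a threshold larger than $2r_1$, do not imply this. Enlarging the threshold widens the overlap slab rather than separating anything, and once $a>L-2r_1$ block $n$ can reach the sausage of block $n-2$, which destroys $1$-dependence. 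Likewise, the forward window must be the last exit from the slab, not the first. The post-cut path may leave the slab upward and return, since it is only constrained to stay above $\ell_n-a$. That last-exit time still has exponential moments, but it is not a hitting time. Second, in your construction $\tau_0=0$ is not a regeneration time: the path after time $0$ is not conditioned on the no-backtrack event, while after each $\tau_n$, $n\ge1$, it is. The Sznitman--Zerner argument is needed here, not the strong Markov property alone, since the $\tau_n$ are not stopping times. It yields independent blocks that are identically distributed only from block $1$ on. Moreover, $\Delta_0=\Phi_\psi(\tau_1)$ has no interface term, and $\Delta_1$ sees the delay block through $I_1$. What your argument actually delivers is a delayed version of (i)--(iii): $(\eta_n)_{n\ge1}$ i.i.d.\ and $(\Delta_n)_{n\ge2}$ stationary and $1$-dependent. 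This is harmless for the CLT, but you should state it that way rather than assert the claims from $n=0$.
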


The proof is a compilation of results from~\cite{Guillaume2025}.

The central limit theorem therefore reduces to one new task: upgrading the $L^1$-control of the
interface terms from~\cite{Guillaume2025} to $L^2$-control. That is the content of the next section.

\section{\texorpdfstring{$L^2$}{L2}-control of the local topological terms}\label{sec:L2}

The only new analytic step beyond~\cite{Guillaume2025} is to upgrade the local interface terms
from $L^1$ to $L^2$. Once this is done, the regenerative CLT becomes standard.

The guiding observation is the same as in~\cite{Guillaume2025}: integrated hole counts on a
compact radius window are controlled deterministically by Wiener-sausage area. Thus second
moments of the relevant topological quantities reduce to second moments of suitable local
sausage areas. Since regeneration lengths and local window lengths have exponential moments,
any polynomial deterministic-time bound is enough.

Throughout this section, $[r_0, r_1] \subset (0,\infty)$ is fixed, and constants may depend on
$r_0, r_1, \psi, \mu$, but not on time.

\subsection{Deterministic reduction to sausage area}

For a compact planar set $K$, define
\begin{equation}\label{eq:HK}
  H(K) := \int_{r_0}^{r_1} \beta_1(K^r)\, dr.
\end{equation}
The LLN paper proves the deterministic bound
\begin{equation}\label{eq:HKbound}
  H(K) \leq \frac{|K^{r_1}|}{2\pi r_0},
\end{equation}
where $|\cdot|$ denotes planar Lebesgue measure. Consequently, for every bounded Borel $\psi$
supported in $[r_0, r_1]$,
\begin{equation}\label{eq:weightedbound}
  \int_{r_0}^{r_1} \beta_1(K^r)\, |\psi(r)|\, dr
  \leq \frac{\|\psi\|_\infty}{2\pi r_0}\, |K^{r_1}|.
\end{equation}
Thus all second-moment estimates reduce to local sausage-area bounds.

\subsection{A finite-time polynomial bound}

For $t \geq 0$, let
\begin{equation}\label{eq:Ht}
  H_t := \int_{r_0}^{r_1} \beta_1^t(r)\, dr.
\end{equation}

\begin{proposition}\label{prop:polybound}
  There exists $C < \infty$ such that for every $t \geq 0$,
  \begin{equation}\label{eq:EHt2}
    \E[H_t^2] \leq C(1 + t^3).
  \end{equation}
  Equivalently, for every bounded Borel $\psi$ supported in $[r_0, r_1]$,
  \begin{equation}\label{eq:EPhipsi2}
    \E\!\left[\left(\int_{r_0}^{r_1} \beta_1^t(r)\, \psi(r)\, dr\right)^{\!2}\right]
    \leq C_\psi (1 + t^3).
  \end{equation}
\end{proposition}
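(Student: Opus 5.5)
We apply the deterministic reduction \eqref{eq:HKbound} to the trace itself. Take $K = X[0,t]$ in \eqref{eq:HK}. Then $K^r = K_t^r$ and $H_t = H(X[0,t])$, so
\begin{equation*}
  H_t \leq \frac{|K_t^{r_1}|}{2\pi r_0}.
\end{equation*}
It therefore suffices to show $\E\bigl[|K_t^{r_1}|^2\bigr] \leq C(1+t^3)$. For \eqref{eq:EPhipsi2} we simply invoke \eqref{eq:weightedbound}, which gives the same bound with $C_\psi = \|\psi\|_\infty^2 C$. The problem is thus reduced to a second-moment bound for the area of a Wiener sausage with drift. The bound $t^3$ is far from sharp, since the true order is $t^2$, so a crude covering argument is enough.

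To bound the area, we control the sausage by the displacement of the path. Let $M_t := \sup_{0\leq s\leq t}\|X_s\|$. Then $K_t^{r_1}$ is contained in the disc of radius $M_t + r_1$, so
\begin{equation*}
  |K_t^{r_1}| \leq \pi (M_t + r_1)^2.
\end{equation*}
Next we bound $M_t$ by the drift plus the Brownian part: $M_t \leq \|\mu\| t + \sup_{s\leq t}\|B_s\|$. Doob's $L^4$ maximal inequality, together with Brownian scaling, gives $\E[\sup_{s\leq t}\|B_s\|^4] \leq C t^2$. Combining these,
\begin{equation*}
  \E\bigl[|K_t^{r_1}|^2\bigr] \leq C\,\E\bigl[(M_t+r_1)^4\bigr] \leq C(1 + t^4).
\end{equation*}
This is off by one power of $t$ from the claim, because of the drift contribution $\|\mu\|^4 t^4$.

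To gain the missing power, we replace the disc covering by a tube covering adapted to the path. Partition $[0,t]$ into $N = \lceil t \rceil$ unit intervals $I_k = [k-1,k]$. Let $R_k := \sup_{s,u\in I_k}\|X_s - X_u\|$ be the oscillation of the path on $I_k$. Then $X(I_k)^{r_1}$ lies in a disc of radius $R_k + r_1$ centered at $X_{k-1}$, so
\begin{equation*}
  |K_t^{r_1}| \leq \sum_{k=1}^N \pi (R_k + r_1)^2 .
\end{equation*}
The $R_k$ are i.i.d. Each satisfies $R_k \leq \|\mu\| + 2\sup_{s\leq 1}\|B_s\|$, so all moments are finite. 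By Cauchy--Schwarz,
\begin{equation*}
  \E\Bigl[\bigl(\textstyle\sum_k Y_k\bigr)^2\Bigr] \leq N \sum_k \E[Y_k^2],
\end{equation*}
with $Y_k = \pi(R_k+r_1)^2$. This gives $\E\bigl[|K_t^{r_1}|^2\bigr] \leq C N^2 \leq C(1+t^2)$, which is even better than $t^3$; the stated bound $C(1+t^3)$ follows a fortiori. For $t<1$ the same estimate with a single block handles the constant term.

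The only step needing care is the deterministic inequality \eqref{eq:HKbound}, which we take as given from~\cite{Guillaume2025}. The rest is a routine moment computation. The main thing to verify is that the covering yields a bound uniform in $t$ up to polynomial factors, and the unit-block decomposition does this cleanly.
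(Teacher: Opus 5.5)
Your proof is correct, but it bounds the sausage area differently from the paper. The paper encloses $K_t^{r_1}$ in a single random rectangle aligned with the drift frame $(e,e^\perp)$, with longitudinal side at most $C(1+t+M_t^\parallel)$ and transverse side at most $C(1+M_t^\perp)$. Squaring the product and using supremum moment bounds gives $(1+t)^2\cdot(1+t)$, and that is exactly where the exponent $3$ comes from. The anisotropic frame avoids the $t^4$ loss you found with the disc covering. It still pays for the transverse spread $\sqrt{t}$ over the whole longitudinal length $t$, since the rectangle has area of order $t^{3/2}$.

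You instead use subadditivity of area over unit time blocks. Each block's sausage lies in a disc of radius $R_k+r_1$, the $R_k$ are i.i.d.\ with all moments finite, and you finish with $(\sum_k Y_k)^2\le N\sum_k Y_k^2$. Your route needs no drift frame and works verbatim in any dimension. It also gives the sharp order $\E[|K_t^{r_1}|^2]\le C(1+t)^2$, consistent with the linear growth of the area. The paper's version is a shorter global estimate, and its non-sharp exponent is harmless because only polynomial growth is used afterwards.

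Two cosmetic points. The preliminary disc computation can be dropped. You should also set $N=\max(1,\lceil t\rceil)$ so that small $t$ is covered by the same formula.
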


\begin{proof}
  By~\eqref{eq:weightedbound}, it suffices to bound the second moment of the sausage area.
  Decompose $X$ in the orthonormal frame $(e, e^\perp)$ adapted to the drift direction:
  \begin{equation}\label{eq:Xdecomp}
    X_s = \langle X_s, e \rangle\, e + \langle X_s, e^\perp \rangle\, e^\perp.
  \end{equation}
  The longitudinal coordinate is a one-dimensional Brownian motion with positive drift, and the
  transverse coordinate is standard Brownian motion. Writing $M_t^\parallel$ and $M_t^\perp$ for
  the suprema of the centered longitudinal and transverse Brownian parts up to time $t$, the
  sausage $K_t^{r_1}$ is contained in a random rectangle whose side lengths are bounded by
  \begin{equation}\label{eq:rectangle}
    C(1 + t + M_t^\parallel), \quad C(1 + M_t^\perp).
  \end{equation}
  Standard supremum moment bounds then give
  \begin{equation}\label{eq:EKtr12}
    \E[|K_t^{r_1}|^2] \leq C(1 + t)^p
  \end{equation}
  with exponent $p = 3$. The weighted version follows from~\eqref{eq:weightedbound}.
\end{proof}

The point is not sharpness: any polynomial bound is enough because every random time horizon
used later has an exponential moment. This is the only role of Proposition~\ref{prop:polybound}.

\subsection{A random-time corollary}

\begin{corollary}\label{cor:randomtime}
  Let $\Theta$ be a nonnegative random time such that
  $\E[e^{\lambda \Theta}] < \infty$ for some $\lambda > 0$. Then
  \begin{equation}\label{eq:EHTheta2}
    \E[H_\Theta^2] < \infty,
  \end{equation}
  and, more generally,
  \begin{equation}\label{eq:EPhiTheta2}
    \E\!\left[\left(\int_{r_0}^{r_1} \beta_1^\Theta(r)\, \psi(r)\, dr\right)^{\!2}\right]
    < \infty
  \end{equation}
  for every bounded Borel $\psi$ supported in $[r_0, r_1]$.
\end{corollary}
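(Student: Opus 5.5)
The key issue is that $\Theta$ may depend on the path $X$, so Proposition~\ref{prop:polybound} cannot simply be integrated against the law of $\Theta$. I would avoid that dependence by a pathwise monotonicity argument. Since $t\mapsto K_t^{r_1}$ is nondecreasing for inclusion, the area $|K_t^{r_1}|$ is nondecreasing in $t$. By \eqref{eq:weightedbound},
\[
|H_\Theta| \le \frac{1}{2\pi r_0}\,|K_\Theta^{r_1}|,
\qquad
\Bigl|\int_{r_0}^{r_1} \beta_1^\Theta(r)\psi(r)\,dr\Bigr| \le \frac{\|\psi\|_\infty}{2\pi r_0}\,|K_\Theta^{r_1}|.
\]
So it suffices to show $\E[|K_\Theta^{r_1}|^2]<\infty$. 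The integrand $H_t$ itself is not monotone in $t$, because holes can be filled. That is why the argument works at the level of the area bound rather than with $H_t$ directly.

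Next I would split according to integer time slabs. On the event $\{k-1<\Theta\le k\}$, monotonicity gives $|K_\Theta^{r_1}|\le |K_k^{r_1}|$. Hence
\[
\E\bigl[|K_\Theta^{r_1}|^2\bigr] \le \sum_{k\ge 1} \E\bigl[|K_k^{r_1}|^2\,\mathbf 1_{\{\Theta>k-1\}}\bigr] + \E\bigl[|K_0^{r_1}|^2\bigr],
\]
and $|K_0^{r_1}|=\pi r_1^2$. For each term I would apply Cauchy--Schwarz:
\[
\E\bigl[|K_k^{r_1}|^2\,\mathbf 1_{\{\Theta>k-1\}}\bigr] \le \E\bigl[|K_k^{r_1}|^4\bigr]^{1/2}\,\mathbb P(\Theta>k-1)^{1/2}.
\]
Chebyshev's inequality with the exponential moment gives $\mathbb P(\Theta>k-1)\le \E[e^{\lambda\Theta}]\,e^{-\lambda(k-1)}$.

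The main obstacle is that Cauchy--Schwarz needs a fourth moment of the area, whereas Proposition~\ref{prop:polybound} only records a second moment. I would redo its proof at order $4$. The rectangle bound \eqref{eq:rectangle} gives
\[
|K_k^{r_1}| \le C(1+k+M_k^\parallel)(1+M_k^\perp).
\]
The Gaussian suprema $M_k^\parallel$ and $M_k^\perp$ have all moments, and these grow like $k^{q/2}$ (by the reflection principle or Doob's $L^q$ inequality). Applying Cauchy--Schwarz once more to separate the two factors yields $\E[|K_k^{r_1}|^4]\le C(1+k)^{p'}$ for some finite $p'$.

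With this bound the series is dominated by
\[
\sum_k C(1+k)^{p'/2}e^{-\lambda(k-1)/2} < \infty,
\]
so $\E[|K_\Theta^{r_1}|^2]<\infty$. This proves both \eqref{eq:EHTheta2} and \eqref{eq:EPhiTheta2}. No independence between $\Theta$ and $X$ is used, only the exponential tail of $\Theta$ and the monotonicity of the sausage. This is what makes the argument applicable later to regeneration and window times, which are functions of the path.
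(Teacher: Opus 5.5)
Your proof is correct, and it takes a genuinely different and more careful route than the paper. The paper's proof is one line: exponential moments give moments of all orders, and Proposition~\ref{prop:polybound} is then applied at $t=\Theta$, in effect reading off $\E[H_\Theta^2]\le C(1+\E[\Theta^3])$. Conditioning on $\Theta$ justifies that step only when $\Theta$ is independent of $X$. It does not by itself cover the path-dependent times to which the corollary is later applied, namely regeneration lengths and forward/backward window times.

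You instead pass through the deterministic area bound \eqref{eq:weightedbound} to reach a quantity that is monotone in $t$; you rightly note that $H_t$ itself is not monotone. You then dominate $|K_\Theta^{r_1}|$ by $|K_{\lceil\Theta\rceil}^{r_1}|$ and decouple $\Theta$ from the path using Cauchy--Schwarz together with the tail bound $\mathbb{P}(\Theta>k-1)\le \E[e^{\lambda\Theta}]e^{-\lambda(k-1)}$.

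What this buys is validity for an arbitrary random time with an exponential tail, with no independence or stopping-time property needed. This matters because regeneration-type times depend on the future of the path, so conditioning, optional stopping and BDG-type bounds are all unavailable. The price is a polynomial bound on an area moment of order strictly above two, which Proposition~\ref{prop:polybound} does not record. The same rectangle bound \eqref{eq:rectangle}, together with $\E[(M_k)^q]\le C_q k^{q/2}$, supplies it immediately. Any $L^{2+\varepsilon}$ polynomial bound combined with H\"older would also suffice.
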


\begin{proof}
  Exponential moments imply moments of every order, so
  Proposition~\ref{prop:polybound} immediately gives the claim.
\end{proof}

This applies in particular to regeneration lengths, sums of finitely many consecutive
regeneration lengths, and the local forward/backward windows from~\cite{Guillaume2025}.

\subsection{Cycle increments and within-cycle oscillations}

Recall the regeneration-time increment
\begin{equation}\label{eq:Deltan3}
  \Delta_n := \Phi_\psi(\tau_{n+1}) - \Phi_\psi(\tau_n).
\end{equation}
Also define the within-cycle oscillation
\begin{equation}\label{eq:Mn}
  M_n := \sup_{\tau_n \leq t \leq \tau_{n+1}} |\Phi_\psi(t) - \Phi_\psi(\tau_n)|.
\end{equation}
The LLN paper proves that both quantities are controlled pathwise by local two-block Wiener
sausages. More precisely, $|\Delta_n|$ and $M_n$ are bounded by constants times the
$r_1$-sausage area of connected path segments whose durations are dominated by
$\eta_n + \eta_{n+1}$. Since regeneration lengths have exponential moments,
Corollary~\ref{cor:randomtime} yields the $L^2$ upgrade.

\begin{proposition}\label{prop:L2cycle}
  For every bounded Borel $\psi$ supported in $[r_0, r_1]$,
  \begin{equation}\label{eq:L2DeltaMn}
    \Delta_n \in L^2, \quad M_n \in L^2.
  \end{equation}
\end{proposition}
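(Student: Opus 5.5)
To prove Proposition~\ref{prop:L2cycle}, I will bound both $|\Delta_n|$ and $M_n$ pathwise by sausage areas of short path pieces, and then apply Corollary~\ref{cor:randomtime} together with the sausage-area moment bound.

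\textbf{Step 1: a pathwise bound.} Since $\beta_1^t(r)\ge 0$, for every $t\in[\tau_n,\tau_{n+1}]$ we have
\[
|\Phi_\psi(t)-\Phi_\psi(\tau_n)| \le \|\psi\|_\infty\,(H_t + H_{\tau_n}).
\]
This crude bound is useless, because $H_{\tau_n}$ grows linearly in $n$. We therefore use the localization of Proposition~\ref{prop:imported}(iv). Write the sausage at time $t$ as the union of the past part $K_{\tau_n}^{r}$ and the recent part built from $X[\tau_n,t]$. Outside a bounded interface region around the cut at $\tau_n$, hole counts are additive. Hence the change $\Phi_\psi(t)-\Phi_\psi(\tau_n)$ equals the integrated hole count of the recent piece, plus interface corrections. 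Each correction is bounded, via \eqref{eq:weightedbound}, by a constant times the $r_1$-sausage area of the path segment spanned by the backward and forward window times around $\tau_n$.

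We end up with
\[
M_n \le C\bigl(|S^{r_1}[\tau_n,\tau_{n+1}]| + |S^{r_1}[\tau_n-\Lambda_n^-,\tau_n+\Lambda_n^+]|\bigr),
\]
where $S^{r_1}[a,b]$ denotes the $r_1$-sausage of $X[a,b]$, and $\Lambda_n^\pm$ are the local window times, which have exponential moments. Since $|\Delta_n|\le M_n$, it suffices to treat $M_n$. This is exactly the two-block domination by segments of duration at most $\eta_n+\eta_{n+1}$ plus window lengths, as stated from the LLN paper.

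\textbf{Step 2: from random to deterministic horizons.} Each segment starts at a stopping time (or, for the backward window, lies within the previous block). By translation invariance of the sausage area and the strong Markov property at $\tau_n$, the area of a segment of random length $\Theta$ has the law of $|K^{r_1}_\Theta|$ for a shifted path. The subtlety is that $\Theta$ is not independent of the path. I will handle this by Cauchy--Schwarz over dyadic time slabs:
\[
\E[|K^{r_1}_\Theta|^2] \le \sum_k \E\bigl[|K^{r_1}_{k+1}|^4\bigr]^{1/2}\,\mathbb P(\Theta\ge k)^{1/2}.
\]
The rectangle argument of Proposition~\ref{prop:polybound} gives polynomial fourth moments, namely $\E|K_t^{r_1}|^4\le C(1+t)^q$, and the exponential tail of $\Theta$ makes the series converge. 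This is the content of Corollary~\ref{cor:randomtime}, used with monotonicity of the sausage in $t$. The backward window also crosses into block $n-1$; for that piece I will instead use a stopping time bound on the reversed side, or simply dominate it by the area over $[\tau_{n-1},\tau_{n+1}]$ when $\Lambda_n^-\le\eta_{n-1}$, with the complement event having exponentially small probability.

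\textbf{Main obstacle.} The hard step is Step 1: making rigorous that the interface nonadditivity is bounded by the area of a bounded-duration segment, uniformly in $r\in[r_0,r_1]$. I would use the Mayer--Vietoris sequence for $K^r=A^r\cup B^r$, where $A$ is the past trace and $B$ is the new segment. It gives
\[
|\beta_1(A^r\cup B^r)-\beta_1(A^r)-\beta_1(B^r)| \le \beta_1(A^r\cap B^r)+\beta_0(A^r\cap B^r).
\]
Regeneration confines the intersection $A^r\cap B^r$ to the interface slab. Its $\beta_1$ and $\beta_0$, integrated over $r$, are then bounded by the area of the $r_1$-neighborhood of the interface segment, using \eqref{eq:HKbound} and a packing bound for components.
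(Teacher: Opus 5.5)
Your proposal follows essentially the paper's route: the localized pathwise bound imported from the LLN paper dominates $|\Delta_n|\le M_n$ by $r_1$-sausage areas of path segments with exponentially integrable durations, and the random-horizon bound finishes; your unit-slab Cauchy--Schwarz estimate (polynomial fourth moments plus monotonicity of the sausage) is exactly what makes Corollary~\ref{cor:randomtime} valid for path-dependent $\Theta$, a point the paper's one-line proof of that corollary passes over. One correction to your optional Mayer--Vietoris sketch: $A^r\cap B^r$ is not an offset, so \eqref{eq:HKbound} cannot be applied to it; no packing bound is needed, however, since $A^r\cap B^r=A^r_{\mathrm{loc}}\cap B^r_{\mathrm{loc}}$ and planar Mayer--Vietoris on the local pieces gives $\beta_1(A^r\cap B^r)\le\beta_1(A^r_{\mathrm{loc}})+\beta_1(B^r_{\mathrm{loc}})$ and $\beta_0(A^r\cap B^r)\le 1+\beta_1\bigl((A_{\mathrm{loc}}\cup B_{\mathrm{loc}})^r\bigr)$, which are hole counts of offsets of path segments.
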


\begin{proof}
  The pathwise bounds from~\cite[Proposition~5.10]{Guillaume2025}, combined with the
  deterministic reduction~\eqref{eq:weightedbound}, reduce both $|\Delta_n|$ and $M_n$ to the
  area of sausages built over connected path segments of random durations with exponential
  moments. Corollary~\ref{cor:randomtime} therefore applies.
\end{proof}

\subsection{The last incomplete cycle}

Define
\begin{equation}\label{eq:NtRt}
  N_t := \max\{n \geq 0 : \tau_n \leq t\},
  \qquad
  R_t := \Phi_\psi(t) - \Phi_\psi(\tau_{N_t}).
\end{equation}
Thus $R_t$ is the contribution of the last incomplete cycle.

The LLN paper proves that $|R_t|$ depends only on a local pre-cut window, a partial post-cut
segment, and their union. Using the same decomposition and Corollary~\ref{cor:randomtime}, one
obtains the $L^2$-version needed here.

\begin{proposition}\label{prop:Rt}
  One has
  \begin{equation}\label{eq:supERt2}
    \sup_{t \geq 1} \E[R_t^2] < \infty.
  \end{equation}
  Consequently,
  \begin{equation}\label{eq:RtL2}
    \frac{R_t}{\sqrt{t}} \to 0 \quad \text{in } L^2,
  \end{equation}
  hence also in probability.
\end{proposition}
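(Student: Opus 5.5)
The plan is to bound $|R_t|$ pathwise by quantities that do not grow with $t$, and then to control their second moments uniformly using renewal theory.

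\textbf{Step 1 (pathwise bound).} By the definition of $N_t$, we have $\tau_{N_t}\le t<\tau_{N_t+1}$. Hence
\[
|R_t|=|\Phi_\psi(t)-\Phi_\psi(\tau_{N_t})|\le \sup_{\tau_{N_t}\le s\le \tau_{N_t+1}}|\Phi_\psi(s)-\Phi_\psi(\tau_{N_t})| = M_{N_t},
\]
so $R_t^2\le M_{N_t}^2$. This reduces the claim to controlling the within-cycle oscillation at the random index $N_t$. The index $N_t$ is not independent of the cycle it selects, because of the inspection paradox. A naive use of $\E[M_n^2]<\infty$ from Proposition~\ref{prop:L2cycle} is therefore not enough.

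\textbf{Step 2 (renewal bound).} I would use the standard estimate
\[
\E[M_{N_t}^2]\le \sum_{n\ge0}\E\bigl[M_n^2\,\mathbf 1\{\tau_n\le t<\tau_{n+1}\}\bigr].
\]
By Proposition~\ref{prop:imported}, the pair $(\eta_n,\text{blocks }n,n+1)$ depends on the past $\tau_n$ only through block $n$. From the pathwise bound of Proposition~\ref{prop:L2cycle}, $M_n\le G_n$, where $G_n$ is an area-type functional of blocks $n,n+1$ whose law does not depend on $n$, and $G_n$ is independent of $\mathcal F_{\tau_n}$ up to block $n-1$. If the 1-dependence through block $n$ gets in the way, I would enlarge the bound to a function of blocks $n-1,n,n+1$ and split the sum over residues mod $3$.

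Conditioning on $\tau_n$ and dropping the constraint $t<\tau_{n+1}$ is too crude, because it produces $\E[N_t]\sim t$. Instead I would keep the event and write
\[
\E\bigl[G_n^2\mathbf 1\{\eta_n>t-\tau_n\}\mid\tau_n\bigr]=g(t-\tau_n),\qquad g(u):=\E[G^2\mathbf 1\{\eta>u\}].
\]
This gives
\[
\E[R_t^2]\le \int_{[0,t]} g(t-s)\,U(ds),
\]
where $U=\sum_n P(\tau_n\in\cdot)$ is the renewal measure. By Cauchy--Schwarz, $g(u)\le \E[G^4]^{1/2}P(\eta>u)^{1/2}$. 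Here $\E[G^4]<\infty$: the argument of Proposition~\ref{prop:polybound} gives $\E|K_t^{r_1}|^4\le C(1+t)^q$ by the same supremum moment bounds, and we combine this with the exponential moments of $\eta_n+\eta_{n+1}$ as in Corollary~\ref{cor:randomtime}. Since $P(\eta>u)\le Ce^{-\lambda u}$, the function $g$ is dominated by an integrable, directly Riemann integrable, decreasing function. The elementary renewal bound $U([s,s+1])\le C$ for all $s$ then gives
\[
\sup_t\int_{[0,t]} g(t-s)\,U(ds)\le C\sum_{k\ge0}\sup_{u\in[k,k+1]}g(u)<\infty,
\]
which proves \eqref{eq:supERt2}.

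\textbf{Step 3.} Once \eqref{eq:supERt2} holds, we get $\E[R_t^2]/t\to0$. This gives \eqref{eq:RtL2}, and Chebyshev's inequality gives convergence in probability.

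The main obstacle is Step 2. The difficulties there are the size-biasing of the cycle straddling $t$ and the dependence of $G_n$ on the neighbouring block. The exponential tail of $\eta$, together with a fourth-moment version of the polynomial area bound and Cauchy--Schwarz, is what defeats the size-biasing. The mod-$3$ splitting handles the $1$-dependence.
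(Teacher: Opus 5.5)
Your proof is correct, granting the same black-box inputs the paper uses, but it takes a different route from the paper.

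\textbf{How the two routes differ.} The paper never passes through $M_{N_t}$. It invokes the local decomposition of \cite[Corollary~5.11]{Guillaume2025}, which bounds $|R_t|$ by three integrated hole counts. These are taken over path segments whose durations are controlled by the backward window at the cut $\tau_{N_t}$, by the age $A_t$, and by their sum. The paper then applies Corollary~\ref{cor:randomtime}, using exponential moments of the backward window and uniformly bounded moments of $A_t$.

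You instead use the trivial domination $|R_t|\le M_{N_t}$. This needs only the oscillation bound already used for Proposition~\ref{prop:L2cycle}, and no separate incomplete-cycle decomposition. The price is that you must confront the inspection paradox directly. You do so correctly: the renewal-measure sum, the Cauchy--Schwarz bound $g(u)\le \E[G^4]^{1/2}P(\eta>u)^{1/2}$, and $\sup_s U([s,s+1])<\infty$ together handle it. The Cauchy--Schwarz step requires a fourth-moment version of the area bound, obtainable as in Proposition~\ref{prop:polybound}.

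Your argument is in fact more careful than the paper's one-line proof on this point. The paper applies Corollary~\ref{cor:randomtime} to segments of duration $A_t$ lying inside the size-biased straddling cycle, without addressing that dependence.

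\textbf{One correction.} The mod-$3$ splitting is not the right fix if $G_n$ also depends on block $n-1$. You are summing expectations, so dependence between different summands plays no role. What matters is independence of $(G_n,\eta_n)$ from the variable you condition on. In that case, condition on $\tau_{n-1}$ rather than $\tau_n$, which is independent of blocks $n-1,n,n+1$. Then replace $g$ by
\[
u\mapsto \E\bigl[G^2\,\mathbf 1\{\eta_{n-1}+\eta_n>u\}\bigr],
\]
which still decays exponentially, so the same renewal estimate goes through unchanged.
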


\begin{proof}
  The local decomposition from~\cite[Corollary~5.11]{Guillaume2025} bounds $|R_t|$ by the sum
  of three integrated hole counts associated with connected path segments whose durations are
  controlled by the backward window, the age process, and their sum. The backward window has
  exponential moments by~\cite{Guillaume2025}, and the age process has uniformly bounded moments
  by standard renewal theory because the regeneration lengths do.
  Corollary~\ref{cor:randomtime} therefore implies the uniform $L^2$-bound.
\end{proof}

At this point all new analytic input is available: the exact cycle increments, the within-cycle
oscillations, and the final incomplete-cycle remainder are all square-integrable. The rest of
the proof is probabilistic.

\section{Central limit theorem at regeneration times}\label{sec:CLTregen}

We now assemble the cycle-level reward sequence and apply the standard CLT for stationary
$1$-dependent square-integrable sequences.

\subsection{Centered cycle rewards}

By the LLN from~\cite{Guillaume2025}, the deterministic asymptotic slope is
\begin{equation}\label{eq:rhopsi}
  \rho_\psi := \frac{\E[\Delta_0]}{\E[\eta_0]}.
\end{equation}
Define
\begin{equation}\label{eq:Yn}
  Y_n := \Delta_n - \rho_\psi \eta_n, \quad n \geq 0.
\end{equation}
The increment $\Delta_n$ is a measurable function of two consecutive regeneration blocks, while
$\eta_n$ is measurable with respect to the $n$-th regeneration block. Since the blocks are
i.i.d., the sequence $(Y_n)$ is stationary and $1$-dependent. By
Proposition~\ref{prop:L2cycle} and the exponential moments of $\eta_n$, it is square-integrable,
and by the definition of $\rho_\psi$ it is centered. Thus we have:

\begin{proposition}\label{prop:Yn}
  The sequence $(Y_n)_{n \geq 0}$ is centered, stationary, $1$-dependent, and
  square-integrable.
\end{proposition}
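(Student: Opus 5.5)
The proof checks the four properties of Proposition~\ref{prop:Yn} one at a time, using only Proposition~\ref{prop:imported} and Proposition~\ref{prop:L2cycle}.

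\emph{Structure.} Write $\mathcal{B}_n$ for the $n$-th recentered regeneration block. By Proposition~\ref{prop:imported}(i) the blocks $(\mathcal{B}_n)_{n\ge 0}$ are i.i.d.
\begin{itemize}
\item By Proposition~\ref{prop:imported}(iii) and the discussion before it, there is a single measurable map $F$, independent of $n$, with $\Delta_n = F(\mathcal{B}_n,\mathcal{B}_{n+1})$. This uses that $\Phi_\psi$ depends only on the shape of the sausage, so it is invariant under the spatial translation used in recentering.
\item The cycle length $\eta_n$ is a fixed measurable function $G(\mathcal{B}_n)$ of the $n$-th block alone.
\end{itemize}
Hence $Y_n = F(\mathcal{B}_n,\mathcal{B}_{n+1}) - \rho_\psi G(\mathcal{B}_n) =: \tilde F(\mathcal{B}_n,\mathcal{B}_{n+1})$, a fixed two-block function of an i.i.d.\ sequence.

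\emph{Stationarity.} The joint law of $(\mathcal{B}_{n+k})_{k\ge0}$ does not depend on $n$. Therefore the law of $(Y_{n+k})_{k\ge 0}$ does not depend on $n$ either.

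\emph{$1$-dependence.} Fix $n$. The variables $(Y_k)_{k\le n}$ are measurable with respect to $\sigma(\mathcal{B}_0,\dots,\mathcal{B}_{n+1})$. The variables $(Y_k)_{k\ge n+2}$ are measurable with respect to $\sigma(\mathcal{B}_{n+2},\mathcal{B}_{n+3},\dots)$. These two $\sigma$-fields are independent because the blocks are independent, so the two families of $Y$'s are independent.

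\emph{Square-integrability.} By Proposition~\ref{prop:L2cycle}, $\Delta_n\in L^2$. By Proposition~\ref{prop:imported}(ii), $\eta_n$ has exponential moments, so $\eta_n\in L^2$. The triangle inequality in $L^2$ then gives $Y_n\in L^2$.

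\emph{Centering.} Since $Y_n\in L^2\subset L^1$, the expectation $\E[Y_n]$ is well defined. By stationarity, $\E[Y_n]=\E[Y_0]=\E[\Delta_0]-\rho_\psi\E[\eta_0]$. This vanishes by the definition~\eqref{eq:rhopsi} of $\rho_\psi$, which is legitimate because $\E[\eta_0]\in(0,\infty)$: it is finite by exponential moments and positive since $\tau_1>\tau_0$.

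\emph{Main obstacle.} The delicate point is the first block, which may not be identically distributed with the others. If $\tau_0=0$ is not itself a regeneration point in the sense of the i.i.d.\ blocks, then $(\Delta_n)$ is stationary only from $n\ge 1$. In that case I would either invoke the imported statement (iii) as given, or restrict to $n\ge1$. Discarding the initial delayed cycle is harmless for the CLT, since $\Delta_0$ and $\eta_0$ are in $L^2$ and their contribution is $o(\sqrt{n})$.
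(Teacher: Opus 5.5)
Your proof is correct and is essentially the paper's argument: you write $Y_n$ as a fixed measurable function of the two consecutive i.i.d.\ blocks $(\mathcal{B}_n,\mathcal{B}_{n+1})$, read off stationarity and $1$-dependence from that representation, and get square-integrability from Proposition~\ref{prop:L2cycle} together with the exponential moments of $\eta_n$, and centering from the definition of $\rho_\psi$. You make explicit the joint two-block representation of $(\Delta_n,\eta_n)$ that the paper uses only implicitly, and your delayed-first-block caveat does not arise here, because Proposition~\ref{prop:imported}(i) takes $\tau_0=0$ with i.i.d.\ blocks from $n=0$.
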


\begin{proof}
  By Proposition~\ref{prop:imported}(iii), the increment sequence $(\Delta_n)_{n \geq 0}$ is
  stationary and $1$-dependent, and $\eta_n$ is measurable with respect to the $n$-th
  regeneration block. Hence $(Y_n)$ is again stationary and $1$-dependent. The centering follows
  from the definition of $\rho_\psi$, and square integrability follows from
  Proposition~\ref{prop:L2cycle} together with Proposition~\ref{prop:imported}(ii).
\end{proof}

\subsection{Scalar CLT at regeneration times}

Let
\begin{equation}\label{eq:Sn}
  S_n := \sum_{k=0}^{n-1} Y_k, \quad n \geq 1, \qquad S_0 := 0.
\end{equation}
A standard central limit theorem for stationary $1$-dependent square-integrable sequences now
applies; see Hoeffding and Robbins~\cite{Hoeffding1948}, and also Bradley~\cite{Bradley2005}
for background on dependent-sequence limit theorems.

\begin{theorem}\label{th:CLTregen}
  As $n \to \infty$,
  \begin{equation}\label{eq:SnnCLT}
    \frac{S_n}{\sqrt{n}} \xrightarrow{d} \mathcal{N}(0, \sigma_{\mathrm{cyc}}^2),
  \end{equation}
  where
  \begin{equation}\label{eq:sigmacyc}
    \sigma_{\mathrm{cyc}}^2 = \Var(Y_0) + 2\Cov(Y_0, Y_1).
  \end{equation}
  Equivalently,
  \begin{equation}\label{eq:PhitaunCLT}
    \frac{\Phi_\psi(\tau_n) - \rho_\psi \tau_n}{\sqrt{n}}
    \xrightarrow{d} \mathcal{N}(0, \sigma_{\mathrm{cyc}}^2).
  \end{equation}
\end{theorem}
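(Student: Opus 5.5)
The plan is to apply the Hoeffding--Robbins central limit theorem for $m$-dependent sequences with $m=1$ to $(Y_n)$, using Proposition~\ref{prop:Yn} as the only input.

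\emph{Variance of the partial sums.} Since $(Y_n)$ is stationary, centered and $1$-dependent, $\Cov(Y_j,Y_k)=0$ whenever $|j-k|\ge 2$. Hence
\begin{equation*}
\Var(S_n)=n\Var(Y_0)+2(n-1)\Cov(Y_0,Y_1),
\end{equation*}
so $\Var(S_n)/n\to\sigma_{\mathrm{cyc}}^2$. In particular $\sigma_{\mathrm{cyc}}^2\ge 0$ automatically.

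\emph{Asymptotic normality.} I would give a self-contained big-block/small-block argument rather than only citing the theorem. Fix $p\ge 2$. Split $\{0,\dots,n-1\}$ into consecutive big blocks of length $p-1$, each followed by one separating index, plus a remainder of fewer than $p$ terms.
\begin{itemize}
\item The big-block sums are i.i.d. Consecutive big blocks are separated by a gap of one index, and $1$-dependence makes them independent. Each is centered with variance $v_p:=\Var(Y_0+\dots+Y_{p-2})$.
\item By the classical Lindeberg--L\'evy CLT, their normalized total converges in law to $\mathcal N(0,v_p/p)$.
\item The separating terms $Y_{kp-1}$ are i.i.d. for the same reason. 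Their sum, together with the remainder, has second moment at most $C\,(n/p+p)\,\E[Y_0^2]$. After division by $\sqrt n$, its $L^2$ norm is at most $C'p^{-1/2}$ uniformly in large $n$.
\item $v_p/p\to\sigma_{\mathrm{cyc}}^2$ as $p\to\infty$, by the variance formula above.
\end{itemize}
A standard approximation lemma (Billingsley, Theorem~3.2) then gives \eqref{eq:SnnCLT}. The degenerate case $\sigma_{\mathrm{cyc}}^2=0$ is covered trivially, since then $\Var(S_n)/n\to 0$ and $S_n/\sqrt n\to 0$ in $L^2$. The main point requiring care is the independence of the big blocks. Here I use that $1$-dependence, as provided by Proposition~\ref{prop:imported}(iii), is the strong form: $\sigma(Y_k,k\le j)$ and $\sigma(Y_k,k\ge j+2)$ are independent. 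This holds because each $Y_n$ is a function of regeneration blocks $n$ and $n+1$, and those blocks are i.i.d.

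\emph{Equivalent form \eqref{eq:PhitaunCLT}.} Since $\tau_0=0$, telescoping gives $\sum_{k=0}^{n-1}\Delta_k=\Phi_\psi(\tau_n)-\Phi_\psi(0)$ and $\sum_{k=0}^{n-1}\eta_k=\tau_n$. So $S_n=\Phi_\psi(\tau_n)-\Phi_\psi(0)-\rho_\psi\tau_n$.

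The term $\Phi_\psi(0)$ comes from $K_0^r$, a closed disc, which has $\beta_1=0$. Hence $\Phi_\psi(0)=0$, and \eqref{eq:PhitaunCLT} is literally the same statement as \eqref{eq:SnnCLT}. Even if an initial boundary term were present, it would be a fixed $L^2$ random variable, and dividing by $\sqrt n$ would make it vanish.
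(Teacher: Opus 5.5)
Your proposal is correct and follows the paper's route. You apply the CLT for the centered, stationary, square-integrable $1$-dependent sequence $(Y_n)$ of Proposition~\ref{prop:Yn}, then use the telescoping identity with $\tau_0=0$. Where the paper simply cites Hoeffding--Robbins, your blocking argument (Lindeberg--L\'evy plus Billingsley's Theorem~3.2) is self-contained and needs only second moments, which is worth having since the original Hoeffding--Robbins theorem assumes third moments. Your observation that $K_0^r$ is a disc, so $\Phi_\psi(0)=0$, also sharpens the paper's remark that the initial term is deterministic.
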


\begin{proof}
  Proposition~\ref{prop:Yn} gives the hypotheses of the classical central limit theorem of
  Hoeffding and Robbins~\cite{Hoeffding1948} for stationary $1$-dependent square-integrable
  sequences. Since
  \begin{equation}\label{eq:telescope}
    \Phi_\psi(\tau_n) - \rho_\psi \tau_n
    = \Phi_\psi(0) - \rho_\psi \tau_0 + S_n,
  \end{equation}
  and the initial term is deterministic, the result follows.
\end{proof}

The quantity $\sigma_{\mathrm{cyc}}^2$ is the cycle-level variance. It is the natural
Green--Kubo expression attached to the centered reward sequence.

\subsection{Functional form}

For the time-change step it is convenient to record the process-level version. Define
\begin{equation}\label{eq:Wn}
  W_n(s) := \frac{1}{\sqrt{n}}\, S_{\lfloor ns \rfloor}, \quad s \in [0,\infty).
\end{equation}

\begin{proposition}\label{prop:FCLT}
  As $n \to \infty$,
  \begin{equation}\label{eq:FCLT}
    W_n \Rightarrow \sigma_{\mathrm{cyc}}\, W
    \quad \text{in } D[0,\infty),
  \end{equation}
  where $W$ is standard Brownian motion.
\end{proposition}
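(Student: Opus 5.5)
We prove the functional CLT (Proposition~\ref{prop:FCLT}) for the partial sums of the centered, stationary, $1$-dependent, square-integrable sequence $(Y_n)$ from Proposition~\ref{prop:Yn}. The plan is to reduce it to Donsker's theorem for i.i.d.\ sums via a block (``big block / small block'') decomposition exploiting $1$-dependence. An alternative we keep in reserve is to invoke directly a functional CLT for $m$-dependent (hence strongly mixing with finitely many nonzero mixing coefficients) stationary $L^2$ sequences; see e.g.\ Bradley~\cite{Bradley2005} or Billingsley's theorem for $\phi$-mixing sequences. We nonetheless sketch a self-contained route.

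\emph{Step 1 (blocking).} Fix an integer $L\ge 2$ and split the index set into consecutive blocks of length $L$. Within the $j$-th block $\{jL,\dots,(j+1)L-1\}$, single out the last index $(j+1)L-1$ as a separator. Set
\[
A_j := \sum_{k=jL}^{(j+1)L-2} Y_k, \qquad B_j := Y_{(j+1)L-1}.
\]
By $1$-dependence, the $A_j$ are i.i.d.\ (consecutive big blocks are separated by one index), and likewise the $B_j$ are i.i.d. Both are centered and in $L^2$. Donsker's theorem then gives
\[
n^{-1/2}\sum_{j<\lfloor ns\rfloor/L} A_j \Rightarrow \sqrt{\Var(A_0)/L}\; W
\]
in $D[0,\infty)$. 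Moreover,
\[
\Var(A_0)=(L-1)\Var(Y_0)+2(L-2)\Cov(Y_0,Y_1),
\]
so $\Var(A_0)/L\to\sigma_{\mathrm{cyc}}^2$ as $L\to\infty$.

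\emph{Step 2 (small blocks and boundary).} The separator sum $n^{-1/2}\sum_{j<ns/L}B_j$ is an i.i.d.\ sum. By Kolmogorov's (or Doob's) maximal inequality, its supremum over $s\le T$ has second moment at most $C\,T\,\Var(Y_0)/L$, which is small uniformly in $n$ once $L$ is large. The incomplete final block contributes at most $n^{-1/2}\max_{k\le nT}\sum_{i=k}^{k+L}|Y_i|$. This term tends to $0$ in probability: $\max_{k\le nT}|Y_k|/\sqrt n\to0$ because $nT\,\mathbb P(|Y_0|>\varepsilon\sqrt n)\le T\varepsilon^{-2}\E[Y_0^2\mathbf 1_{\{|Y_0|>\varepsilon\sqrt n\}}]\to 0$. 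The rescaling $\lfloor ns\rfloor/L$ versus $ns/L$ is handled by the continuity of the Brownian limit.

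\emph{Step 3 (approximation lemma).} We conclude with the standard approximation theorem (Billingsley, Theorem~3.2). The processes $W_n^{(L)}$ built from the $A_j$ converge to $\sqrt{\Var(A_0)/L}\,W$, and these limits converge to $\sigma_{\mathrm{cyc}}W$ as $L\to\infty$. Furthermore, $\lim_{L}\limsup_n \mathbb P(\sup_{s\le T}|W_n(s)-W_n^{(L)}(s)|>\varepsilon)=0$ by Step 2. Together these yield $W_n\Rightarrow\sigma_{\mathrm{cyc}}W$ on each $D[0,T]$, hence in $D[0,\infty)$.

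The main obstacle is the uniform (in $n$) maximal control in Step 2. The key point is that it needs only second moments, which Proposition~\ref{prop:L2cycle} supplies; the rest is routine. If $\sigma_{\mathrm{cyc}}^2=0$, the same argument gives convergence to the zero process.
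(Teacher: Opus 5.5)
Your argument is correct, but it takes a different route from the paper. The paper proves Proposition~\ref{prop:FCLT} in one line by citing the functional CLT for dependent sequences in Billingsley, Section~19. A stationary $1$-dependent sequence is $\phi$-mixing with $\phi_k=0$ for $k\ge 2$, so such mixing results apply with only $\E[Y_0^2]<\infty$; this is essentially the ``alternative in reserve'' you mention.

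You instead reduce to the i.i.d.\ Donsker theorem by blocking:
\begin{itemize}
\item The big blocks $A_j$ are genuinely i.i.d., since consecutive ones are separated by one index. This uses $1$-dependence in the strong sense, namely independence of $\sigma(Y_k:k\le n)$ and $\sigma(Y_k:k\ge n+2)$. That holds here because $Y_n$ is a two-block factor of i.i.d.\ blocks.
\item The separators are handled by Doob's $L^2$ maximal inequality, with a bound of order $T\Var(Y_0)/L$ uniform in $n$.
\item The partial final block is handled by the union bound together with $\E[Y_0^2\mathbf 1_{\{|Y_0|>\varepsilon\sqrt n\}}]\to0$.
\item The pieces are glued by Billingsley's Theorem~3.2, using that the sup-norm on $[0,T]$ dominates the Skorokhod metric.
\end{itemize}

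The citation buys brevity. Your route is self-contained and uses only second moments, which is exactly what Proposition~\ref{prop:L2cycle} supplies. It also exhibits the Green--Kubo constant directly as $\lim_{L}\Var(A_0)/L=\Var(Y_0)+2\Cov(Y_0,Y_1)$. Finally, it covers the degenerate case $\sigma_{\mathrm{cyc}}^2=0$ with no extra work, which is relevant since the paper does not establish positivity of the variance.
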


\begin{proof}
  This is the functional CLT for stationary $1$-dependent square-integrable sequences
  (see~\cite[Section~19]{Billingsley1999}).
\end{proof}

Section~\ref{sec:CLTregen} is the end of the cycle-level argument. The remaining task is to
replace the deterministic cycle index $n$ by the renewal counting process $N_t$ and to show
that the final incomplete-cycle contribution is negligible at scale $\sqrt{t}$. That will be the
content of the next section.

\section{Renewal time change}\label{sec:renewal}

We now pass from the regeneration-time CLT of Section~\ref{sec:CLTregen} to deterministic time.
The argument has three ingredients: (i)~the exact decomposition at the last completed
regeneration time, (ii)~negligibility of the incomplete cycle, and (iii)~the law of large
numbers for the renewal counting process.

\subsection{Decomposition at the last completed regeneration time}

Recall the renewal counting process
\begin{equation}\label{eq:Nt}
  N_t := \max\{n \geq 0 : \tau_n \leq t\},
\end{equation}
the age process
\begin{equation}\label{eq:At}
  A_t := t - \tau_{N_t},
\end{equation}
and the incomplete-cycle remainder
\begin{equation}\label{eq:Rt2}
  R_t := \Phi_\psi(t) - \Phi_\psi(\tau_{N_t}).
\end{equation}
Then
\begin{equation}\label{eq:decomp}
  \Phi_\psi(t) - \rho_\psi t
  = \bigl(\Phi_\psi(\tau_{N_t}) - \rho_\psi \tau_{N_t}\bigr)
  + \bigl(R_t - \rho_\psi A_t\bigr).
\end{equation}
The first term is the centered fluctuation accumulated up to the last completed regeneration
cycle; the second is the error created by the final incomplete cycle.

Using the telescoping identity~\eqref{eq:telescope}, the first bracket in~\eqref{eq:decomp}
equals $S_{N_t}$ up to the deterministic initial term.

\subsection{Negligibility of the incomplete cycle}

By Proposition~\ref{prop:Rt},
\begin{equation}\label{eq:Rtneg}
  \frac{R_t}{\sqrt{t}} \to 0 \quad \text{in } L^2.
\end{equation}
It remains to control the age process on the same scale.

\begin{lemma}\label{lem:age}
  One has
  \begin{equation}\label{eq:Atneg}
    \frac{A_t}{\sqrt{t}} \to 0 \quad \text{in } L^2,
  \end{equation}
  and therefore
  \begin{equation}\label{eq:RtAtneg}
    \frac{R_t - \rho_\psi A_t}{\sqrt{t}} \to 0 \quad \text{in } L^2,
  \end{equation}
  hence also in probability.
\end{lemma}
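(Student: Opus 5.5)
The plan is to prove the uniform bound $\sup_{t \geq 0} \E[A_t^2] < \infty$. Dividing by $t$ then gives the first claim. The second claim follows by combining this with Proposition~\ref{prop:Rt} through Minkowski's inequality:
\begin{equation*}
  \Bigl\| \frac{R_t - \rho_\psi A_t}{\sqrt{t}} \Bigr\|_{L^2}
  \leq \frac{1}{\sqrt{t}}\Bigl( \sup_{s \geq 1} \|R_s\|_{L^2} + |\rho_\psi| \sup_{s \geq 0} \|A_s\|_{L^2} \Bigr) \to 0 .
\end{equation*}
Convergence in probability is then immediate from Chebyshev's inequality.

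For the uniform bound on the age, I would avoid renewal-theoretic limit theorems, such as convergence of the age distribution, because they need non-lattice assumptions and give only limits, not bounds valid for all $t$. Instead I would use a crude pathwise domination. The age $A_t$ is at most the length of the cycle straddling $t$, namely $\eta_{N_t}$. Moreover $N_t \leq \tau_{N_t}/\min$, which is not useful, so I would bound instead by a union over cycle indices. On the event $\{N_t = n\}$ we have $\tau_n \leq t$, so
\begin{equation*}
  A_t^2 \leq \sum_{n \geq 0} \eta_n^2 \,\mathbf{1}\{\tau_n \leq t < \tau_n + \eta_n\}.
\end{equation*}
Since $\tau_n$ depends only on $\eta_0, \dots, \eta_{n-1}$, it is independent of $\eta_n$ by Proposition~\ref{prop:imported}(ii). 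Conditioning on $\tau_n$ and writing $F$ for the law of $\eta_0$ gives
\begin{equation*}
  \E[A_t^2] \leq \int_{[0,t]} g(t-s)\, U(ds),
  \qquad
  g(u) := \E\bigl[\eta_0^2 \mathbf{1}\{\eta_0 > u\}\bigr],
  \qquad
  U := \sum_{n \geq 0} \P(\tau_n \in \cdot\,).
\end{equation*}

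The key step, and the only mildly delicate one, is to bound this renewal convolution uniformly in $t$ without invoking the key renewal theorem. I would use two elementary facts. First, the renewal measure of unit intervals is uniformly bounded: $\sup_x U([x, x+1]) \leq U([0,1]) < \infty$. This follows from the standard subadditivity argument, since each $\eta_0 > 0$ almost surely and the cycle lengths are i.i.d. Second, $g$ is nonincreasing with $g(u) \leq \E[\eta_0^2 e^{\lambda \eta_0}]\, e^{-\lambda u}$, which is finite by Proposition~\ref{prop:imported}(ii). Splitting $[0,t]$ into unit intervals then yields
\begin{equation*}
  \E[A_t^2] \leq U([0,1]) \sum_{k \geq 0} g(k) < \infty,
\end{equation*}
uniformly in $t$.

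A possible subtlety is whether $\eta_n$ is really independent of $\tau_n = \eta_0 + \dots + \eta_{n-1}$. If the first block had a different (delayed) law, I would treat $n = 0$ separately: that term is bounded by $\E[\eta_0^2]$, and the rest of the argument is unchanged since $\eta_0$ also has exponential moments.
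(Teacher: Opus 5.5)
Your proof is correct, and its outer structure matches the paper's: establish $\sup_{t\ge 0}\E[A_t^2]<\infty$, then combine with Proposition~\ref{prop:Rt} via the triangle (Minkowski) inequality, then pass to convergence in probability. The difference lies entirely in the uniform age bound. The paper disposes of it in one line by citing standard renewal theory. You prove it directly:
\begin{itemize}
\item you bound $A_t\le \eta_{N_t}$;
\item you expand over cycle indices and use that $\eta_n$ is independent of $\tau_n=\eta_0+\dots+\eta_{n-1}$;
\item this gives the convolution bound $\E[A_t^2]\le\int_{[0,t]} g(t-s)\,U(ds)$;
\item you control it with the uniform local bound $U([x,x+1])\le U([0,1])$ on the renewal measure and the monotonicity of $g$.
\end{itemize}
All of these steps check out. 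What your route buys is a self-contained bound valid for every $t\ge 0$, not just asymptotically, with no lattice hypothesis. It also shows that exponential moments are far more than needed: since $\sum_{k\ge 0} g(k)=\E\bigl[\eta_0^2\lceil\eta_0\rceil\bigr]\le \E[\eta_0^3]+\E[\eta_0^2]$, a third moment of $\eta_0$ suffices. The paper's route is shorter but delegates exactly this point to the literature.

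Three small clean-ups:
\begin{itemize}
\item Delete the stray sentence about $N_t\le\tau_{N_t}/\min$.
\item In the exponential tail bound, take $\lambda$ strictly smaller than an exponent $\lambda'$ with $\E[e^{\lambda'\eta_0}]<\infty$, so that $\E[\eta_0^2e^{\lambda\eta_0}]<\infty$ is justified. Alternatively, use the third-moment bound above.
\item The delayed-first-block caveat is unnecessary here, since $\tau_0=0$ and the $\eta_n$ are i.i.d.\ by Proposition~\ref{prop:imported}(ii).
\end{itemize}
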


\begin{proof}
  The regeneration lengths $\eta_n$ are i.i.d.\ and have exponential moments by
  Proposition~\ref{prop:imported}, so standard renewal theory implies that the age process has
  uniformly bounded second moment (see~\cite[Chapter~V.3]{Asmussen2003}
  or~\cite[Section~XI.3]{Feller1971}):
  \begin{equation}\label{eq:supEAt2}
    \sup_{t \geq 0} \E[A_t^2] < \infty.
  \end{equation}
  Hence~\eqref{eq:Atneg} follows immediately,
  and~\eqref{eq:RtAtneg} follows from~\eqref{eq:Rtneg} by the triangle inequality.
\end{proof}

\subsection{Renewal scaling}

Set
\begin{equation}\label{eq:m}
  m := \E[\eta_0].
\end{equation}
Since the regeneration lengths are i.i.d.\ with finite mean, the renewal law of large numbers gives
\begin{equation}\label{eq:NtLLN}
  \frac{N_t}{t} \to \frac{1}{m}
  \quad \text{almost surely and in probability.}
\end{equation}
Equivalently,
\begin{equation}\label{eq:Ntscaling}
  \frac{N_t}{t/m} \to 1 \quad \text{in probability.}
\end{equation}
Thus the random cycle index $N_t$ is asymptotically deterministic at first order.

\subsection{Random time change}

We now combine the functional CLT from Proposition~\ref{prop:FCLT} with the renewal scaling
above. For $s \geq 0$, recall
\begin{equation}\label{eq:Wn2}
  W_n(s) := \frac{1}{\sqrt{n}}\, S_{\lfloor ns \rfloor}.
\end{equation}
By Proposition~\ref{prop:FCLT},
\begin{equation}\label{eq:FCLT2}
  W_n \Rightarrow \sigma_{\mathrm{cyc}}\, W
  \quad \text{in } D[0,\infty).
\end{equation}
Since
\begin{equation}\label{eq:Ntscaling2}
  \frac{N_t}{t/m} \to 1 \quad \text{in probability,}
\end{equation}
the usual random-time-change argument (see~\cite[Theorem~14.4]{Billingsley1999},
or~\cite[Chapter~13]{Whitt2002}) yields
\begin{equation}\label{eq:SNtCLT}
  \frac{S_{N_t}}{\sqrt{t/m}}
  \xrightarrow{d}_{t \to \infty}
  \mathcal{N}(0, \sigma_{\mathrm{cyc}}^2).
\end{equation}
Equivalently,
\begin{equation}\label{eq:SNtCLT2}
  \frac{S_{N_t}}{\sqrt{t}}
  \xrightarrow{d}_{t \to \infty}
  \mathcal{N}\!\left(0, \frac{\sigma_{\mathrm{cyc}}^2}{m}\right).
\end{equation}
This is the fluctuation theorem at the last completed regeneration time.

\section{Central limit theorem at deterministic time}\label{sec:detCLT}

We now combine the decomposition~\eqref{eq:decomp}, the negligible-remainder
estimate~\eqref{eq:RtAtneg}, and the random-index limit~\eqref{eq:SNtCLT2}.

\subsection{Scalar CLT}

\begin{theorem}\label{th:detCLT}
  Let $\psi$ be a bounded Borel function supported in a compact interval
  $[r_0, r_1] \subset (0,\infty)$. Then
  \begin{equation}\label{eq:detCLT}
    \frac{\Phi_\psi(t) - \rho_\psi t}{\sqrt{t}}
    \xrightarrow{d}_{t \to \infty}
    \mathcal{N}(0, \sigma_\psi^2),
  \end{equation}
  where
  \begin{equation}\label{eq:sigmapsi}
    \sigma_\psi^2
    = \frac{1}{\E[\eta_0]}
      \bigl(\Var(Y_0) + 2\Cov(Y_0, Y_1)\bigr).
  \end{equation}
\end{theorem}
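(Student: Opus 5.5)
The plan is to assemble the ingredients already established in Sections~\ref{sec:CLTregen} and~\ref{sec:renewal} through the exact decomposition~\eqref{eq:decomp}, and then close with Slutsky's lemma. First, by~\eqref{eq:decomp} and the telescoping identity~\eqref{eq:telescope},
\begin{equation*}
  \frac{\Phi_\psi(t) - \rho_\psi t}{\sqrt{t}}
  = \frac{\Phi_\psi(0)}{\sqrt{t}} + \frac{S_{N_t}}{\sqrt{t}} + \frac{R_t - \rho_\psi A_t}{\sqrt{t}},
\end{equation*}
using $\tau_0 = 0$. Since $K_0^r$ is a closed disc, $\beta_1^0(r) = 0$ for all $r > 0$, so $\Phi_\psi(0) = 0$. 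Even without this observation the term is a deterministic constant divided by $\sqrt{t}$, and therefore tends to zero.

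Second, Lemma~\ref{lem:age} gives $(R_t - \rho_\psi A_t)/\sqrt{t} \to 0$ in probability. Third, the random-index limit~\eqref{eq:SNtCLT2} gives $S_{N_t}/\sqrt{t} \xrightarrow{d} \mathcal{N}(0, \sigma_{\mathrm{cyc}}^2/m)$. Slutsky's lemma then yields~\eqref{eq:detCLT} with variance $\sigma_{\mathrm{cyc}}^2/m$. Substituting $m = \E[\eta_0]$ from~\eqref{eq:m} and $\sigma_{\mathrm{cyc}}^2$ from~\eqref{eq:sigmacyc} produces exactly the Green--Kubo expression~\eqref{eq:sigmapsi}. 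The degenerate case $\sigma_{\mathrm{cyc}}^2 = 0$ is covered automatically, since $\mathcal{N}(0,0)$ is interpreted as $\delta_0$ and Slutsky's lemma still applies.

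The only step that deserves care is the random time change leading to~\eqref{eq:SNtCLT}. Since $\sigma_{\mathrm{cyc}} W$ has continuous paths, Skorokhod convergence of $W_n$ is locally uniform convergence. I would make the step explicit as follows. Set $n = \lfloor t/m \rfloor$ and write $S_{N_t}/\sqrt{n} = W_n(N_t/n)$. By~\eqref{eq:Ntscaling}, $N_t/n \to 1$ in probability, so Billingsley's Theorem~14.4 (or, equivalently, joint convergence of $(W_n, N_t/n)$ followed by the continuous mapping $(w,s)\mapsto w(s)$, which is continuous at continuous limits $w$) gives $W_n(N_t/n) \Rightarrow \sigma_{\mathrm{cyc}} W(1)$. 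Joint convergence holds because the second coordinate converges to a constant. Finally, $\sqrt{n/t} \to m^{-1/2}$, so $S_{N_t}/\sqrt{t} = \sqrt{n/t}\, W_n(N_t/n)$ converges to $m^{-1/2}\sigma_{\mathrm{cyc}} W(1)$.

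An alternative that avoids the functional CLT is Anscombe's theorem. Uniform continuity in probability of $S_n/\sqrt{n}$ follows from Kolmogorov's maximal inequality applied separately to the even- and odd-indexed subsequences of the $1$-dependent sequence $(Y_n)$, each of which is i.i.d.
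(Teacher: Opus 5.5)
Your proof is correct and follows the paper's argument: you use the decomposition~\eqref{eq:decomp} with the telescoping identity, show $(R_t-\rho_\psi A_t)/\sqrt{t}\to 0$ via Lemma~\ref{lem:age}, invoke the random-index limit~\eqref{eq:SNtCLT2}, and close with Slutsky's lemma to get $\sigma_\psi^2=\sigma_{\mathrm{cyc}}^2/\E[\eta_0]$. Your explicit time change (evaluating $W_n$ at $N_t/n$ with $n=\lfloor t/m\rfloor$) only spells out the step the paper delegates to Billingsley's Theorem~14.4 in Section~\ref{sec:renewal}; the remarks that $\Phi_\psi(0)=0$ and the Anscombe variant are valid but inessential refinements.
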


\begin{proof}
  By~\eqref{eq:decomp},
  \begin{equation}\label{eq:detCLTproof}
    \frac{\Phi_\psi(t) - \rho_\psi t}{\sqrt{t}}
    = \frac{\Phi_\psi(\tau_{N_t}) - \rho_\psi \tau_{N_t}}{\sqrt{t}}
    + \frac{R_t - \rho_\psi A_t}{\sqrt{t}}.
  \end{equation}
  The second term converges to $0$ in probability by Lemma~\ref{lem:age}. For the first term,
  \begin{equation}\label{eq:detCLTtelescope}
    \Phi_\psi(\tau_n) - \rho_\psi \tau_n
    = \Phi_\psi(0) - \rho_\psi \tau_0 + S_n,
  \end{equation}
  so the deterministic initial term is negligible after division by $\sqrt{t}$,
  and~\eqref{eq:SNtCLT2} applies. Slutsky's lemma then yields~\eqref{eq:detCLT}, with
  \begin{equation}\label{eq:sigmarel}
    \sigma_\psi^2 = \frac{\sigma_{\mathrm{cyc}}^2}{\E[\eta_0]}.
  \end{equation}
  Substituting~\eqref{eq:sigmacyc} gives~\eqref{eq:sigmapsi}.
\end{proof}

\subsection{The variance formula}

The expression
\begin{equation}\label{eq:GK}
  \sigma_\psi^2
  = \frac{1}{\E[\eta_0]}
    \bigl(\Var(Y_0) + 2\Cov(Y_0, Y_1)\bigr)
\end{equation}
is the natural Green--Kubo formula in the present regenerative setting, i.e.\ the long-run
variance of the centered reward sequence. The numerator is the asymptotic variance per
regeneration cycle of the centered reward sequence, while the denominator converts cycle index
into physical time. The appearance of only lag~$0$ and lag~$1$ covariances reflects the exact
$1$-dependence of $(Y_n)$. In particular,
\begin{equation}\label{eq:sigmanonneg}
  \sigma_\psi^2 \geq 0.
\end{equation}
Nonnegativity follows from Theorem~\ref{th:detCLT}: it is the variance of the limiting Gaussian
law. We do not address strict positivity in this note. The central limit theorem is complete
without it, and the variance is identified by~\eqref{eq:GK}.

\section{Finite-dimensional Gaussian fluctuations}\label{sec:multidim}

The scalar argument extends immediately to finitely many test functions.

Let $\psi_1, \ldots, \psi_m$ be bounded Borel functions supported in a common compact interval
$[r_0, r_1]$, and define
\begin{equation}\label{eq:Ynj}
  \Delta_n^j := \Phi_{\psi_j}(\tau_{n+1}) - \Phi_{\psi_j}(\tau_n),
  \quad
  \rho_j := \frac{\E[\Delta_0^j]}{\E[\eta_0]},
  \quad
  Y_n^j := \Delta_n^j - \rho_j \eta_n.
\end{equation}
Set
\begin{equation}\label{eq:Ynvec}
  \mathbf{Y}_n := (Y_n^1, \ldots, Y_n^m).
\end{equation}
Since each component is a measurable function of two consecutive regeneration blocks and is
square-integrable by Proposition~\ref{prop:L2cycle}, the sequence $(\mathbf{Y}_n)$ is centered,
stationary, $1$-dependent, and square-integrable as an $\R^m$-valued sequence.

For $a \in \R^m$, the scalar sequence $\langle a, \mathbf{Y}_n \rangle$ is again centered,
stationary, $1$-dependent, and square-integrable, so Theorem~\ref{th:detCLT} applies to the
corresponding linear combination. Therefore, by the Cram\'{e}r--Wold device
(see, e.g.,~\cite{Feller1971}), one obtains the multivariate Gaussian limit.

\begin{theorem}\label{th:multidim}
  Let $\psi_1, \ldots, \psi_m$ be bounded Borel functions supported in a common compact
  interval $[r_0, r_1]$. Then
  \begin{equation}\label{eq:multidimCLT}
    \left(
      \frac{\Phi_{\psi_1}(t) - \rho_1 t}{\sqrt{t}},
      \ldots,
      \frac{\Phi_{\psi_m}(t) - \rho_m t}{\sqrt{t}}
    \right)
    \xrightarrow{d}_{t \to \infty}
    \mathcal{N}(0, \Sigma),
  \end{equation}
  where the covariance matrix $\Sigma = (\Sigma_{ij})_{1 \leq i,j \leq m}$ is given by
  \begin{equation}\label{eq:Sigmaij}
    \Sigma_{ij}
    = \frac{1}{\E[\eta_0]}
      \bigl(\Cov(Y_0^i, Y_0^j) + \Cov(Y_0^i, Y_1^j) + \Cov(Y_1^i, Y_0^j)\bigr).
  \end{equation}
\end{theorem}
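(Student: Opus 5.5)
The plan is to reduce Theorem~\ref{th:multidim} to the scalar Theorem~\ref{th:detCLT} through the Cram\'er--Wold device, using the linearity of $\psi \mapsto \Phi_\psi$. Fix $a = (a_1,\dots,a_m) \in \R^m$ and set $\psi_a := \sum_j a_j \psi_j$. This function is again bounded, Borel, and supported in $[r_0,r_1]$. By \eqref{eq:PhipsiBetti}, $\Phi_{\psi_a}(t) = \sum_j a_j \Phi_{\psi_j}(t)$ pathwise. The regeneration times $\tau_n$ depend only on the path and on the radius window (Proposition~\ref{prop:imported}), not on $\psi$, so the same cycles serve every $\psi_j$. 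Consequently $\Delta_n^{\psi_a} = \sum_j a_j \Delta_n^j$ and $\rho_{\psi_a} = \E[\Delta_0^{\psi_a}]/\E[\eta_0] = \sum_j a_j \rho_j$. The centered reward for $\psi_a$ is therefore exactly $Y_n^{\psi_a} = \langle a, \mathbf{Y}_n\rangle$.

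Applying Theorem~\ref{th:detCLT} to $\psi_a$ gives
\[
\Bigl\langle a, \Bigl(\tfrac{\Phi_{\psi_j}(t)-\rho_j t}{\sqrt t}\Bigr)_{j}\Bigr\rangle
= \frac{\Phi_{\psi_a}(t)-\rho_{\psi_a} t}{\sqrt t} \xrightarrow{d} \mathcal N\bigl(0,\sigma_{\psi_a}^2\bigr),
\]
where $\sigma_{\psi_a}^2 = \E[\eta_0]^{-1}\bigl(\Var\langle a,\mathbf Y_0\rangle + 2\Cov(\langle a,\mathbf Y_0\rangle,\langle a,\mathbf Y_1\rangle)\bigr)$.

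Next I expand this bilinearly:
\[
\Var\langle a,\mathbf Y_0\rangle = \sum_{i,j} a_i a_j \Cov(Y_0^i,Y_0^j),
\qquad
2\Cov(\langle a,\mathbf Y_0\rangle,\langle a,\mathbf Y_1\rangle) = \sum_{i,j} a_i a_j \bigl(\Cov(Y_0^i,Y_1^j)+\Cov(Y_1^i,Y_0^j)\bigr),
\]
where the second identity is obtained by symmetrizing the double sum. Hence $\sigma_{\psi_a}^2 = a^\top \Sigma a$, with $\Sigma$ as in \eqref{eq:Sigmaij}. The matrix $\Sigma$ is symmetric by construction. It is positive semidefinite because $a^\top\Sigma a$ is the variance of a limit law for every $a$, or equivalently by \eqref{eq:sigmanonneg} applied to $\psi_a$. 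So $\mathcal N(0,\Sigma)$ is a well-defined law, and $\langle a, Z\rangle \sim \mathcal N(0,a^\top\Sigma a)$ for $Z\sim\mathcal N(0,\Sigma)$. Since this holds for every $a$, the Cram\'er--Wold device yields \eqref{eq:multidimCLT}.

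The main point needing care is that nothing in the scalar chain depends on $\psi$ beyond its support window. The regeneration scheme, the interface localization, the $L^2$ bounds of Propositions~\ref{prop:L2cycle} and \ref{prop:Rt}, and the renewal quantities $N_t$ and $A_t$ are all built from the window $[r_0,r_1]$ and the path alone. The common-window hypothesis ensures that the linear combination $\psi_a$ stays in the class covered by Theorem~\ref{th:detCLT}. Once this is checked, the rest of the argument is algebra.
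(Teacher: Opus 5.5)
Your proposal is correct and follows the paper's own argument. Like the paper, you apply the Cram\'er--Wold device to the scalar Theorem~\ref{th:detCLT} for $\psi_a=\sum_j a_j\psi_j$, whose centered cycle reward is $\langle a,\mathbf{Y}_n\rangle$, and expand the Green--Kubo variance bilinearly into $a^\top\Sigma a$; your explicit checks that the common window yields common regeneration times and that $\Sigma$ is symmetric positive semidefinite fill in details the paper leaves implicit.
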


\begin{proof}
  Apply the scalar deterministic-time CLT to each linear combination
  \begin{equation}\label{eq:lincomb}
    \sum_{j=1}^m a_j\, \Phi_{\psi_j}(t),
  \end{equation}
  whose centered cycle reward is $\langle a, \mathbf{Y}_n \rangle$. The covariance formula
  follows by expanding the variance of the limiting scalar Gaussian and using $1$-dependence.
\end{proof}

This completes the fluctuation theory for the Betti-curve test class associated with compactly
supported radius weights.

%%%%%%%%%%%%%%%%%%%%%%%%%%%%%%%%%%%%%%%%%%%%%%%%%%%%%%%%%%%%%%%%%%%
%% Bibliography                                                  %%
%%%%%%%%%%%%%%%%%%%%%%%%%%%%%%%%%%%%%%%%%%%%%%%%%%%%%%%%%%%%%%%%%%%


\begin{thebibliography}{99}

\bibitem{Asmussen2003}
S.~Asmussen,
\emph{Applied Probability and Queues}, 2nd ed.,
Springer, New York, 2003.

\bibitem{Billingsley1999}
P.~Billingsley,
\emph{Convergence of Probability Measures}, 2nd ed.,
Wiley, New York, 1999.

\bibitem{Bobrowski2018}
O.~Bobrowski and M.~Kahle,
Topology of random geometric complexes: a survey,
\emph{J.\ Appl.\ Comput.\ Topol.}
\textbf{1} (2018), 331--364.

\bibitem{Bradley2005}
R.~C.~Bradley,
Basic properties of strong mixing conditions.\ A survey and some open questions,
\emph{Probab.\ Surv.}
\textbf{2} (2005), 107--144.

\bibitem{Chazal2016}
F.~Chazal, V.~de Silva, M.~Glisse, and S.~Oudot,
\emph{The Structure and Stability of Persistence Modules},
Springer, 2016.

\bibitem{CohenSteiner2007}
D.~Cohen-Steiner, H.~Edelsbrunner, and J.~Harer,
Stability of persistence diagrams,
\emph{Discrete Comput.\ Geom.}
\textbf{37} (2007), 103--120.

\bibitem{Feller1971}
W.~Feller,
\emph{An Introduction to Probability Theory and Its Applications, Vol.~II},
2nd ed., Wiley, New York, 1971.

\bibitem{Guillaume2025}
T.~Guillaume,
\emph{Persistence of the Wiener Sausage: Sampling Stability and a Law of Large Numbers
for Drifted Planar Brownian Motion},
\ARXIV{2604.03130}

\bibitem{Hiraoka2018}
Y.~Hiraoka, T.~Shirai, and K.~D.~Trinh,
Limit theorems for persistence diagrams,
\emph{Ann.\ Appl.\ Probab.}
\textbf{28} (2018), 2740--2780.

\bibitem{Hoeffding1948}
W.~Hoeffding and H.~Robbins,
The central limit theorem for dependent random variables,
\emph{Duke Math.\ J.}
\textbf{15} (1948), 773--780.

\bibitem{Krebs2022a}
J.~Krebs and C.~Hirsch,
Functional central limit theorems for persistent Betti numbers on cylindrical networks,
\emph{Scand.\ J.\ Stat.}
\textbf{49} (2022), 427--454.

\bibitem{Krebs2022b}
J.~T.~N.~Krebs and W.~Polonik,
On the asymptotic normality of persistent Betti numbers,
\emph{Electron.\ J.\ Stat.}
\textbf{16} (2022), 2551--2588.

\bibitem{Sznitman1998}
A.-S.~Sznitman,
\emph{Brownian Motion, Obstacles and Random Media},
Springer, Berlin, 1998.

\bibitem{Whitt2002}
W.~Whitt,
\emph{Stochastic-Process Limits},
Springer, New York, 2002.

\end{thebibliography}
\end{document}